\newcommand{\pare}[1]{\left( #1 \right)}
\newcommand{\norm}[1]{\left\| #1 \right\|}
\newcommand{\bra}[1]{\left[ #1 \right]}
\newcommand{\sgn}{\textnormal{sgn}}
\newcommand{\cH}{\mathcal{H}}
\newcommand{\bR}{\mathbb{R}}
\newcommand{\cO}{\mathcal{O}}
\def\comm#1#2{{\left\llbracket#1,#2\right\rrbracket}}
\newtheorem{satz}{Proposition}[section]
\newtheorem{remark}[satz]{Remark} 
\newtheorem{theorem}{Theorem}
\newtheorem{lemma}[satz]{Lemma}
\title{On the motion of gravity-capillary waves with odd viscosity}
\author[R. Granero-Belinch\'{o}n]{Rafael Granero-Belinch\'{o}n}
\email{rafael.granero@unican.es}
\address{Departamento  de  Matem\'aticas,  Estad\'istica  y  Computaci\'on,  Universidad  de Cantabria.  Avda.  Los  Castros  s/n,  Santander,  Spain.}
\author[A. Ortega]{Alejandro Ortega}
\email{alortega@math.uc3m.es}
\address{Dpto. de Matem\'aticas, Universidad Carlos III de Madrid, Av. de la Universidad 30, 28911 Legan\'es (Madrid),
Spain}
\begin{document}
\begin{abstract}
We develop three asymptotic models of surface waves in a non-newtonian fluid with odd viscosity. This viscosity is also known as Hall viscosity and appears in a number of applications such as quantum Hall fluids or chiral active fluids. Besides the odd viscosity effects, these models capture both gravity and capillary forces up to quadratic interactions and take the form of nonlinear and nonlocal wave equations. Two of these models describe bidirectional waves while the third PDE studies the case of unidirectional propagation. We also prove the well-posedness of these asymptotic models in spaces of analytic functions and in Sobolev spaces. Finally, we present a number of numerical simulations for the unidirectional model.
\end{abstract}

\subjclass[2010]{35L75, 35Q35, 35Q31, 35S10, 35R35, 76D03}
\keywords{Waves, odd viscosity, Hall viscosity, moving interfaces, free-boundary problems}


\maketitle
{\small
\tableofcontents}

\allowdisplaybreaks
\section{Introduction}
The equations describing the motion of an incompressible fluid take the following form
\begin{align*}
\rho\left(\frac{\partial}{\partial t}u+(u\cdot\nabla)u\right)&=\nabla\cdot \mathscr{T} &&\text{ in }\Omega(t)\times[0,T],\\
\nabla\cdot u&=0&&\text{ in }\Omega(t)\times[0,T],\\
\frac{\partial}{\partial t}\rho+\nabla\cdot(u \rho)&=0 &&\text{ in }\Omega(t)\times[0,T],
\end{align*}
where $u, \rho$ and $\mathscr{T}$ denote the velocity, density and stress tensor of the fluid respectively. This stress tensor takes different forms depending on the physical properties of the fluid. On the one hand, we have the case of inviscid fluids where the previous equations reduce to the well-known Euler system with a stress tensor given by
$$
\mathscr{T}^i_j=-p\delta^i_j.
$$
On the other hand, we have the case of viscous newtonian fluids where the tensor takes the classical form
$$
\mathscr{T}^i_j=-p\delta^i_j+\nu_e\left(\frac{\partial }{\partial x_j}u^i+\frac{\partial }{\partial x_i}u^j\right).
$$
Due to the symmetries of the tensor, this viscosity is called \emph{even} or \emph{shear} viscosity.

Fluids in which both time reversal and parity are broken can display a dissipationless
viscosity that is odd under each of these symmetries. This viscosity is called \emph{odd} or Hall viscosity.  In this case the stress tensor takes the following form (cf. \cite{khain2020stokes})
$$
\mathscr{T}^i_j=-p\delta^i_j+\nu_o\left(\frac{\partial }{\partial x_i}(u^j)^\perp+\left(\frac{\partial }{\partial x_i}\right)^\perp u^j\right),
$$
where, for a vector $a=(a_1,a_2)$ we used the notation
$$
a^\perp=(a_2,-a_1).
$$

Situations where such a viscosity arise in a natural way are, for instance, the motion of quantum Hall fluids at low temperature (cf. \cite{avron1995viscosity}), the motion of vortices (cf. \cite{wiegmann2014anomalous}) or chiral active fluids (cf. \cite{banerjee2017odd}) among other applications (see \cite{souslov2019topological} or \cite{abanov2020hydrodynamics} and the references therein).

Since the seminar works of Avron, Seiler \& Zograf \cite{avron1995viscosity} (see also \cite{avron1998odd,banerjee2017odd,ganeshan2017odd,ganeshannon,lapa2014swimming,souslov2019topological}) the effect of the odd viscosity has been an active research area. Fluids that experience odd or Hall viscosity have a rather counterintuitive behavior. For instance, while a rotating disk immersed in a  fluid with even viscosity experiments a friction force that oposses the rotation, the same disk rotating in a fluid with odd viscosity feels a pressure in the radial direction (see \cite[Section 5.3]{avron1998odd} and \cite{lapa2014swimming} for more details).

Whilst in three dimensions, terms in the viscosity tensor with odd symmetry were known in the context of anisotropic fluids, Avron noticed that in two dimensions odd viscosity and isotropy can hold at the same time (cf. \cite{avron1998odd}). This motivated the study of two dimensional incompressible flows with odd viscosity effects. This situation can be described by the following system (cf. \cite{avron1998odd,ganeshan2017odd})

\begin{subequations}\label{eq:oddNS1}
\begin{align}
\rho\left(\frac{\partial}{\partial t}u+(u\cdot\nabla)u\right)&=-\nabla p+\nu_o\Delta u^\perp &&\text{ in }\Omega(t)\times[0,T],\\
\nabla\cdot u&=0&&\text{ in }\Omega(t)\times[0,T],\\
\frac{\partial}{\partial t}\rho+\nabla\cdot(u\rho)&=0 &&\text{ in }\Omega(t)\times[0,T],
\end{align}
\end{subequations}
where $u, \rho$ and $p$ denote the velocity, density and pressure of the fluid and $\nu_o$ is a positive constant reflecting the influence of odd viscosity. 

\section{The free boundary problem}
\subsection{Derivation}
We observe that using the divergence free condition, system \eqref{eq:oddNS1} can be rewritten as
\begin{subequations}\label{eq:oddNS}
\begin{align}
\rho\left(\frac{\partial}{\partial t}u+(u\cdot\nabla)u\right)&=-\nabla (p-\nu_o (\nabla\cdot u^\perp)) &&\text{ in }\Omega(t)\times[0,T],\\
\nabla\cdot u&=0&&\text{ in }\Omega(t)\times[0,T],\\
\frac{\partial}{\partial t}\rho+u\cdot\nabla\rho&=0 &&\text{ in }\Omega(t)\times[0,T].
\end{align}
\end{subequations}
This system needs to be supplemented with appropriate initial and boundary conditions.

We stress that the viscosity tensor takes the form of a gradient and, as a consequence, it can be absorbed into the pressure in the bulk of the fluid once we define the modified pressure
$$
\tilde{p}=p-\nu_o (\nabla\cdot u^\perp).
$$
However, the odd viscosity affects the boundary conditions (cf. \cite{ganeshan2017odd}). This is of particular importance when considering free boundary problems for flows under the effect of odd viscosity. Such free boundary flows have been reported in experiments by  Soni, Bililign, Magkiriadou, Sacanna, Bartolo, Shelley \& Irvine (see \cite{soni2018free,soni2019odd}). The purpose of this paper is to study the dynamics of a incompressible, irrotational and homogeneous fluid bounded above by a free interface under the effect of gravity, surface tension and, more importantly, odd viscosity.

We consider the (to be determined) moving domain
$$
\Omega(t)=\left\{(x_1,x_2)\in \mathbb{R}^2, x_1\in\bR, x_2<\eta(x_1,t)\right\},
$$
for certain function $\eta$. The free interface is then
$$
\Gamma(t)=\left\{(x_1,x_2)\in \mathbb{R}^2, x_1\in\bR, x_2=\eta(x_1,t)\right\}.
$$
This function $\eta$ is advected by the fluid and as a consequence it satisfies a transport equation. 

Since we consider surface tension effects, the stress tensor satisfies the following boundary condition
$$
\mathscr{T}^i_jn_j=\gamma \mathcal{K}n_i,
$$
where $\vec{n}$ is the unit upward pointing normal to the surface wave, $\gamma$ is the surface tension strength and $\mathcal{K}$ denotes the curvature of the free boundary
$$
\mathcal{K}=\frac{\eta_{x_1x_1}}{\left(1+\eta_{x_1}^2\right)^{3/2}}\quad \text{ on }\Gamma(t).
$$

Since the fluid is homogeneous, we have that the density $\rho$ remains constant. It is well-known that the occurrence of viscosity creates a boundary layer near the surface wave. However, the thickness of this boundary layer is small when the physical parameters are in a certain regime (see \cite{dias2008theory,lamb1932hydrodynamics} for instance). In particular, the vorticity is confined to this narrow boundary layer near the surface (see \cite{abanov2018odd}). Then, following the ideas in \cite{abanov2018odd,dias2008theory} and the references therein, we consider potential flow with a modified boundary conditions for the redefined pressure $\tilde{p}$. Then, we assume that
$$
u=\nabla \theta,
$$
where $\theta$ is the scalar velocity potential. This velocity potential, by virtue of the incompressibility condition, is harmonic. Then, the effects of boundary layer are captured by the odd viscosity modified pressure term (cf.\cite{abanov2018odd,ganeshannon})
$$
\tilde{p}=-\gamma \mathcal{K}-\frac{2\nu_{o}}{(1+\eta_{x_1}^2)^{1/2}}\left(u\cdot n\right)_{x_1}.
$$
This new boundary condition for the modified pressure encodes the contribution of the narrow boundary layer and gives and accurate description of the problem.

Thus, to describe surface waves under the effect of gravity, capillary forces and odd viscosity we have to consider the following free boundary problem \cite{abanov2018odd,abanov2019free,ganeshannon}
\begin{subequations}\label{eq:wavesodd}
\begin{align}
\Delta \theta&=0&&\text{ in }\Omega(t)\times[0,T],\\
\rho\left(\theta_t+\frac{\theta_{x_1}^2+\theta_{x_2}^2}{2}+G\eta\right)-\gamma\mathcal{K}&=\frac{2\nu_{o}}{(1+\eta_{x_1}^2)^{1/2}}\left(\frac{\eta_t}{(1+\eta_{x_1}^2)^{1/2}}\right)_{x_1}&&\text{ on }\Gamma(t)\times[0,T],\\
\eta_t&=-\eta_{x_1}\theta_{x_1}+\theta_{x_2}&&\text{ on }\Gamma(t)\times[0,T],
\end{align}
\end{subequations}
where $\theta$ is the scalar potential (units of $length^2/time$), $\eta$ denotes the surface wave (units of $length$) and $G$ (units of $length/time^2$) is the gravity acceleration. The constant $\nu_o$ reflects the odd viscosity contribution and has units of $(length^2/ time)$. 

As is customary (see \cite{zakharov1968stability}), we use the trace of the velocity potential 
$$
\xi(t,x_1,x_2)=\theta(t,x_1,\eta(t,x_1)).
$$
Thus, \eqref{eq:wavesodd} can be written as
\begin{subequations}\label{eq:all2}
\begin{align}
\Delta \theta&=0&&\text{ in }\Omega(t)\times[0,T],\\
\theta  &= \xi \qquad &&\text{ on }\Gamma(t)\times[0,T],\\
\xi_t+\frac{\theta_{x_1}^2+\theta_{x_2}^2}{2}+G\eta&=\frac{\gamma}{\rho}\frac{\eta_{x_1x_1}}{\left(1+\eta_{x_1}^2\right)^{3/2}}+\theta_{x_2}\left(-\eta_{x_1}\theta_{x_1}+\theta_{x_2}\right)&&\nonumber\\
&\quad+\frac{2\nu_{o}}{\rho}\frac{1}{(1+\eta_{x_1}^2)^{1/2}}\left(\frac{\eta_t}{(1+\eta_{x_1}^2)^{1/2}}\right)_{x_1}&&\text{ on }\Gamma(t)\times[0,T],\\
\eta_t&=-\eta_{x_1}\theta_{x_1}+\theta_{x_2}&&\text{ on }\Gamma(t)\times[0,T].
\end{align}
\end{subequations}
The system (\ref{eq:all2}) is supplemented with an initial condition for $h$ and $\xi$:
\begin{align}\label{eq:initial}
h(x,0)&=h_0(x),\\
\xi(x,0)&=\xi_0(x).
\end{align}
\subsection{Prior works}
System \eqref{eq:all2} is the odd viscosity analogue to the water waves system with (even) viscosity in the work of Dias, Dyachenko \& Zakharov (see \cite{dias2008theory}). A careful inspection shows that the system with odd viscosity is of purely dispersive nature while the system in \cite{dias2008theory} is of cross-diffusion type. This free boundary problem with even viscosity has received a large amount of attention in recent years both from the applied mathematics viewpoint, which is interested in new and better mathematical models that capture the main dynamics in suitable regimes, and from the pure mathematics community, that studies the dynamics of the underlying differential equations. We refer the reader to  \cite{dutykh2007viscous,dutykh2009visco,dutykh2007dissipative,kakleas2010numerical,
ngom2018well,ambrose2012well,GS19,granero2020well,granero2020well2}. 

Even if the odd viscosity effects are most visible at the free surface of the fluids (see \cite{abanov2018odd,ganeshan2017odd,abanov2020hydrodynamics}) and the phenomenon has been described experimentally (cf. \cite{soni2018free,soni2019odd}), the number of results studying surface waves with odd viscosity remains, to the best of our knowledge, small. 

Let us briefly summarize the available literature on surface waves with odd viscosity. Very recently, Abanov, Can \& Ganeshan \cite{abanov2018odd} considered the free surface dynamics of a two-dimensional incompressible fluid with odd viscosity. Besides studying the dispersion relation of such waves derived a number of weakly nonlinear models. First, after neglecting gravity, surface tension and terms of cubic order, they obtained the following Craig-Sulem-type model (see equations (49) and (50) in \cite{abanov2018odd})
\begin{align*}
u_t-\left[\cH(u\cH u)\right]_{x_1}+2\nu_o \Lambda u_{x_1}&=-2\nu_o\left[\comm{\cH}{h}\cH u\right]_{x_1 x_1 x_1},\\
h_t+\cH u&=-\left[\comm{\cH}{h}\cH u\right]_{x_1},
\end{align*}
where $\comm{\cdot}{\cdot}$ denotes the commutator and $\cH$ and $\Lambda$ stand for the Hilbert transform and the fractional Laplacian respectively (see below for a proper definition). As the authors point out in their work, this system is Hamiltonian. In addition, the authors also considered the small surface angle approximation to conclude the following model (see equations (51) and (52) in \cite{abanov2018odd})
\begin{align*}
h_{tt}&=-\left[\cH h_t h_t\right]_{x_1}-2\nu_o  \cH h_{tx_1 x_1}.
\end{align*}
This latter equation has the same nonlinearity as the $h-$model in \cite{granero2017model} while keeping a linear operator akin to the classical Benjamin-Ono equation. Indeed, the previous model can be equivalently written as
\begin{align*}
v_{t}&=-\left[\cH v v\right]_{x_1}-2\nu_o  \cH v_{x_1 x_1}.
\end{align*}
In this new variable we recognize a well-known nonlinearity already heavily studied in the literature (see \cite{castro2008global,bae2015global,li2011one} and the references therein). 

In addition, Abanov, Can \& Ganeshan \cite{abanov2018odd}, starting from the previous small angle approximation, derived and studied what they named the \emph{chiral} Burgers equation
$$
u_t+2uu_{x_1}=2i\nu_o u_{x_1 x_1}.
$$ 
We would like to emphasize that all these models are obtained with \emph{heuristic} arguments instead of a more rigorous asymptotic approximation.

Later on, Monteiro \& Abanov \cite{abanov2019free} presented a variational principle which accounts for odd viscosity effects in free boundary incompressible flows.

Moreover, Monteiro \& Ganeshan \cite{ganeshannon} studied the case of waves with odd viscosity in a shallow fluid and derived the celebrated KdV equation as a model in the long wavelength weakly nonlinear regime.

\subsection{Contributions and main results}
The purpose of this paper is twofold. On the one hand, we obtain three new models for capillary-gravity surface waves with odd viscosity. These new models are obtained through a multiscale expansion in the steepness of the wave and extend the previous results in \cite{aurther2019rigorous,GS19,granero2019asymptotic}. Furthermore, our models consider both gravity and surface tension forces and, as a consequence, generalize those in \cite{abanov2018odd}. In particular, we obtain the model
\begin{align}\label{eq:final_eq_intro}
f_{tt}&=-\Lambda f-\beta\Lambda^3f+\alpha_o \Lambda f_{tx_1}+\varepsilon\left[-\cH\left((\cH f_t)^2\right)+\left(\comm{\cH}{f}\Lambda f\right)\right]_{x_1}\nonumber\\
&\quad +\varepsilon\left[-\alpha_o\left(\comm{\cH}{f}\Lambda f_{tx_1}\right)+\beta\comm{\cH}{f}\Lambda^3 f\right]_{x_1}&&\text{ on }\Gamma\times[0,T].
\end{align}
Noticing that the terms that are $\cO(\varepsilon \alpha_o)$ and $\cO(\varepsilon \beta)$ are much smaller that the rest, we can also consider the following PDE
\begin{align}\label{eq:final_eq2_intro}
f_{tt}&=-\Lambda f-\beta\Lambda^3f+\alpha_o \Lambda f_{tx_1}+\varepsilon\left[-\cH\left((\cH f_t)^2\right)+\left(\comm{\cH}{f}\Lambda f\right)\right]_{x_1}\text{ on }\Gamma\times[0,T].
\end{align}
Similarly, if we restrict ourselves to the study of unidirectional surface waves, we can derive the following dispersive equation
\begin{equation}\label{eq:final_eq3_intro}
\begin{split}
2f_{t}+\alpha_o\Lambda f_{t}=&\frac{1}{\varepsilon}\left\{f_{x_1}+\cH f+(\alpha_o-\beta)\cH f_{x_1x_1}\right\}\\
&+\cH\left(\Lambda f\right)^2-\comm{\cH}{f}\Lambda f+(\alpha_o-\beta)\comm{\cH}{f}\Lambda^3 f,\ \ \text{ on }\Gamma\times[0,T].
\end{split}
\end{equation}
On the other hand, we prove a number of mathematical results establishing the well-posedness of our new models in appropriate functional spaces  (see \cite{granero2018asymptotic,granero2020well2,granero2020global} for some related results).

Roughly speaking, we prove the following theorems (see below for the precise statements)
\begin{enumerate}
\item Equation \eqref{eq:final_eq_intro} is locally well-posed for analytic initial data. The proof is based on a Cauchy-Kovalevski type argument that relies in finding appropriate bounds for a cascade of linear equations as in \cite{aurther2019rigorous}.
\item Equation \eqref{eq:final_eq2_intro} is locally well-posed in $H^{4.5}(\bR)\times H^{3}(\bR)$ when the Bond number $\beta>0$. The required energy estimates exploit the commutator structure of the nonlinearity in a very precise way.
\item Equation \eqref{eq:final_eq3_intro} is locally well-posed in $H^{3}(\bR)$ when the odd Reynolds number $\alpha_o$ is strictly positive regardless of the value of the Bond number $\beta$. Furthermore, when $0<\alpha_o=\beta$, the problem admits a distributional solution in $H^{1.5}(\bR)$.
\end{enumerate}

The plan of the paper is as follows. First, in section \ref{ref:sec2} we write the dimensionless problem of gravity-capillary waves with odd viscosity in the Arbitrary Lagrangian-Eulerian formulation. Next, in section \ref{ref:sec3} we derive and study the case of a bidirectional gravity-capillary wave with odd viscosity. In particular, we obtain two new nonlinear and nonlocal wave equations (see equations \eqref{eq:final_eq_intro} and \eqref{eq:final_eq2_intro}). These PDEs describe the main dynamics in the weakly nonlinear regime and consider the case where the steepness parameter $\varepsilon$, the odd Reynolds number $\alpha_o$ and Bond number $\beta$ are small. Moreover, we establish the local strong well-posedness of \eqref{eq:final_eq_intro} and \eqref{eq:final_eq2_intro} in appropriate functional spaces. Then, in section \ref{ref:sec4} we study the case of unidirectional waves and obtain the new nonlocal and nonlinear dispersive equation \eqref{eq:final_eq3_intro} in the case of right-moving waves. Furthermore, we establish the local strong well-posedness of \eqref{eq:final_eq3_intro} in Sobolev spaces provided that the odd Reynolds number is positive. Finally, we also prove a local in time existence of distributional solution with limited regularity for \eqref{eq:final_eq3_intro}. In section \ref{ref:sec5} we conclude with a brief discussion presenting the main novelties of our work.

\subsection{Notation}
Given a matrix $A$, we write $A^i_j$ for the component of $A$, located on row $i$ and column $j$. We will use the Einstein summation convention for expressions with indexes.

We write
$$
f_{x_j}=\frac{\partial f}{\partial x_j},\quad f_t=\frac{\partial f}{\partial t }
$$
for the space derivative in the $j-$th direction and for a time derivative, respectively. 

Let $f(x_1)$ denote a $L^2$ function on $\mathbb{R}$. We define the
Hilbert transform $\mathcal{H}$ and the Dirichlet-to-Neumann operator $ \Lambda $ and its powers,  respectively,  using Fourier series
\begin{align}\label{Hilbert}
\widehat{\mathcal{H}f}(k)=-i\text{sgn}(k) \hat{f}(k) \,, \ \ 
\widehat{\Lambda f}(k)=|k|\hat{f}(k)\,,
\end{align}
where 
$$
\hat f(k) = {\dfrac{1}{\sqrt{2\pi}}} \int_{\mathbb{R}} f(x_1) \ 
e^{-ikx_1}dx_1.
$$
Finally, given an operator $\mathcal{T}$, we define the commutator as
$$
\comm{\mathcal{T}}{f}g=\mathcal{T}(fg)-f\mathcal{T}(g).  
$$

\section{Gravity-capillary waves with odd viscosity}\label{ref:sec2}
As mentioned above, we use the trace of the velocity potential 
$$
\xi(t,x_1,x_2)=\theta(t,x_1,\eta(t,x_1)).
$$
Thus, \eqref{eq:wavesodd} can be written in dimensionless form (see \cite{granero2017model} for more details) as
\begin{subequations}\label{eq:alldimensionless}
\begin{align}
\Delta \theta&=0&&\text{ in }\Omega(t)\times[0,T],\\
\theta  &= \xi \qquad &&\text{ on }\Gamma(t)\times[0,T],\\
\xi_t+\varepsilon\frac{\theta_{x_1}^2+\theta_{x_2}^2}{2}+\eta&=\frac{\beta\eta_{x_1x_1}}{\left(1+\left(\varepsilon\eta_{x_1}\right)^2\right)^{3/2}}+\varepsilon\theta_{x_2}\left(-\varepsilon\eta_{x_1}\theta_{x_1}+\theta_{x_2}\right)&&\\
&\quad+\alpha_o\frac{1}{(1+\varepsilon^2\eta_{x_1}^2)^{1/2}}\left(\frac{\eta_t}{(1+\varepsilon^2\eta_{x_1}^2)^{1/2}}\right)_{x_1}&&\text{ on }\Gamma(t)\times[0,T],\\
\eta_t&=-\varepsilon\eta_{x_1}\theta_{x_1}+\theta_{x_2}&&\text{ on }\Gamma(t)\times[0,T],
\end{align}
\end{subequations}
where $\varepsilon$ is known as the \emph{steepness parameter} and measures the ratio between the amplitude and the wavelength of the wave while $\alpha_o$ is a dimensionless parameter akin to the Reynolds number that represents the ratio between gravity and odd viscosity forces. Due to this similarity, we call it \emph{odd Reynolds number}. Similarly, $\beta$ is the Bond number comparing the gravity and capillary forces.

Now we want to express system \eqref{eq:alldimensionless} on the reference domain $\Omega$ and reference interface $\Gamma$
\begin{align}\label{Omega}
\Omega = \mathbb{R} \times (-\infty, 0) \,, &&
\Gamma = \mathbb{R} \times \{0\} \,.
\end{align} 
In order to do that we define the following family of diffeomorphisms
\begin{equation*}
\begin{aligned}
&\psi : && \bra{0, T} \times\Omega && \to && \Omega\pare{t}, \\
&&& \pare{x_1, x_2, t} && \mapsto && \psi\pare{x_1, x_2,t} = \pare{x_1, x_2 + \varepsilon \eta\pare{x_1,t}}. 
\end{aligned}
\end{equation*}
We compute 
\begin{align}\label{eq:diif_matrices}
\nabla \psi = \pare{
\begin{array}{cc}
1 & 0 \\
\varepsilon \eta_{x_1}\pare{x_1,t} & 1
\end{array}
}, &&
A = \pare{\nabla\psi}^{-1} = \pare{
\begin{array}{cc}
1 & 0 \\
-\varepsilon \eta_{x_1}\pare{x_1,t} & 1
\end{array}
}. 
\end{align}
We define the ALE variables 
$$
\Theta = \theta \circ \psi.
$$
We can equivalently write \eqref{eq:alldimensionless} in the (fixed) reference domain as follows
\begin{subequations}\label{eq:ALE}
\begin{align}
A^\ell_j\left(A^k_j\Theta_{x_{k}}\right)_{x_\ell}&=0&&\text{ in }\Omega\times[0,T],\\
\Theta  &= \xi \qquad &&\text{on }\Gamma\times[0,T],\\
\xi_t+\frac{\varepsilon}{2}A^k_j\Theta_{x_k} A^\ell_j\Theta_{x_\ell}+\eta&=\frac{\beta\eta_{x_1x_1}}{\left(1+\left(\varepsilon\eta_{x_1}\right)^2\right)^{3/2}}+\varepsilon A^k_2\Theta_{x_k} A^\ell_j\Theta_{x_\ell} A^2_j&&\\
&\quad+\alpha_o\frac{1}{(1+\varepsilon^2\eta_{x_1}^2)^{1/2}}\left(\frac{\eta_t}{(1+\varepsilon^2\eta_{x_1}^2)^{1/2}}\right)_{x_1}&&\text{ on }\Gamma\times[0,T],\\
\eta_t&=A^k_j\Theta_{x_k} A^2_j&&\text{ on }\Gamma\times[0,T].
\end{align}
\end{subequations}

Using the explicit value of $A^k_j$, we can regroup terms and find that
\begin{subequations}\label{eq:ALE2}
\begin{align}
\Delta \Theta &={\varepsilon\pare{\eta_{x_1x_1} \ \Theta_{x_2} + 2\eta_{x_1} \Theta_{x_1x_2}}-\varepsilon^2(\eta_{x_1})^2\Theta_{x_2x_2}}  ,  &&\text{in } \Omega\times[0,T]\,,\\
\Theta  &= \xi \qquad &&\text{on }\Gamma\times[0,T],\\
\xi_t &=-\frac{\varepsilon}{2}\left[(\Theta_{x_1})^2+(\varepsilon \eta_{x_1}\Theta_{x_2})^2+(\Theta_{x_2})^2-2\varepsilon \eta_{x_1}\Theta_{x_2}\Theta_{x_1}\right]&&\nonumber\\
&\quad- \eta +\varepsilon \Theta_{x_2}\left(-\varepsilon \eta_{x_1} \Theta_{x_1}+\varepsilon^2 (\eta_{x_1})^2 \Theta_{x_2} +\Theta_{x_2}\right)\nonumber\\
&\quad+\frac{\beta\eta_{x_1x_1}}{\left(1+\left(\varepsilon\eta_{x_1}\right)^2\right)^{3/2}}\nonumber\\
&\quad+\alpha_o\frac{1}{(1+\varepsilon^2\eta_{x_1}^2)^{1/2}}\left(\frac{\eta_t}{(1+\varepsilon^2\eta_{x_1}^2)^{1/2}}\right)_{x_1}&&\text{ on }\Gamma\times[0,T],\\
\eta_t&=-\varepsilon \eta_{x_1} \Theta_{x_1}+\varepsilon^2 (\eta_{x_1})^2 \Theta_{x_2} +\Theta_{x_2}&&\text{ on }\Gamma\times[0,T].
\end{align}
\end{subequations}

\section{The bidirectional asymptotic model for waves with odd viscosity} \label{ref:sec3}
\subsection{Derivation}
We are interested in a model approximating the dynamics up to $\mathcal{O}(\varepsilon^2)$. As a consequence a number of terms can be neglected with this order of approximation and we find

\begin{subequations}\label{eq:ALE22}
\begin{align}
\Delta \Theta &={\varepsilon\pare{\eta_{x_1x_1} \ \Theta_{x_2} + 2\eta_{x_1} \Theta_{x_1x_2}}}  ,  &&\text{in } \Omega\times[0,T]\,,\\
\Theta  &= \xi \qquad &&\text{on }\Gamma\times[0,T],\\
\xi_t &=-\frac{\varepsilon}{2}\left[(\Theta_{x_1})^2+(\Theta_{x_2})^2\right]- \eta +\varepsilon \Theta_{x_2}^2+\beta \eta_{x_1x_1}+\alpha_o\eta_{tx_1}&&\text{ on }\Gamma\times[0,T],\\
\eta_t&=-\varepsilon \eta_{x_1} \Theta_{x_1} +\Theta_{x_2}&&\text{ on }\Gamma\times[0,T].
\end{align}
\end{subequations}

In order to obtain the asymptotic model for the interface under the effect of odd viscosity we will assume the following form for the unknowns
\begin{equation}
\label{eq:ansatz}
\begin{aligned}
\Theta\pare{x_1,x_2,t} & = \sum_n \varepsilon^n \Theta^{\pare{n}}\pare{x_1,x_2,t}, \\
\xi\pare{x_1,t} & = \sum_n \varepsilon^n \xi^{\pare{n}}\pare{x_1,t}, \\
\eta\pare{x_1,t} & = \sum_n \varepsilon^n \eta^{\pare{n}}\pare{x_1,t}.
\end{aligned}
\end{equation}

For the case $n=0$, we have that
\begin{subequations}\label{eq:n0}
\begin{align}
\Delta \Theta^{(0)} &=0 ,  &&\text{in } \Omega\times[0,T]\,,\\
\Theta^{(0)}  &= \xi^{(0)}  \qquad &&\text{on }\Gamma\times[0,T],\\
\xi^{(0)} _t &=- \eta^{\pare{0}}+\beta \eta^{(0)}_{x_1x_1} +\alpha_o\eta^{\pare{0}}_{tx_1}  &&\text{ on }\Gamma\times[0,T],\\
\eta_t^{(0)} &=\Theta^{(0)}_{x_2} &&\text{ on }\Gamma\times[0,T].
\end{align}
\end{subequations}
The solution of the associated elliptic problem for the first term of the velocity potential is given by
$$
\widehat{\Theta^{\pare{0}}} \pare{k, x_2, t}  = \xi^{(0)}(k,t)e^{|k|x_2}\qquad \text{in } \Omega\times[0,T],
$$
so
$$
\Theta^{(0)}_{x_2}=\Lambda \xi^{(0)}\qquad \text{on }\Gamma.
$$
Then, we find that the linear problem for the first term of the series for the interface is
\begin{align*}
\eta_{tt}^{(0)} &=-\Lambda\eta^{\pare{0}}-\beta\Lambda^3\eta^{\pare{0}}+\alpha_o\Lambda\eta^{\pare{0}}_{tx_1} &&\text{ on }\Gamma\times[0,T].
\end{align*}

The second term in the expansion is 
\begin{subequations}\label{eq:ALE2v2}
\begin{align}
\Delta \Theta^{(1)} &=\eta^{(0)}_{x_1x_1} \ \Theta^{(0)}_{x_2} + 2\eta^{(0)}_{x_1} \Theta^{(0)}_{x_1x_2}  ,  &&\text{in } \Omega\times[0,T]\,,\\
\Theta^{(1)}  &= \xi^{(1)} \qquad &&\text{on }\Gamma\times[0,T],\\
\xi_t^{(1)} &=-\frac{1}{2}\left[(\Theta^{(0)}_{x_1})^2+(\Theta^{(0)}_{x_2})^2\right]- \eta^{(1)} + (\Theta^{(0)}_{x_2})^2+\beta\eta^{\pare{1}}_{x_1x_1}+\alpha_o\eta_{tx_1}^{(1)}&&\text{ on }\Gamma\times[0,T],\\
\eta^{(1)}_t&=-\eta^{(0)}_{x_1} \Theta^{(0)}_{x_1} +\Theta^{(1)}_{x_2}&&\text{ on }\Gamma\times[0,T].
\end{align}
\end{subequations}

We recall now the following lemma
\begin{lemma}[\cite{GS19}]\label{lem:solutions_Poisson}
Let us consider the Poisson equation
\begin{equation}
\label{eq:Poisson}
\left\lbrace
\begin{aligned}
& \Delta u \pare{x_1 , x_2} &&= b \pare{x_1 , x_2}, & \pare{x_1, x_2} &\in \mathbb{R}\times  \pare{-\infty, 0}, \\
&  u \pare{x_1, 0} &&= g \pare{x_1} , & x_1 &\in \mathbb{R}, \\
& \lim_{x_2\rightarrow -\infty}\partial_2 u \pare{x_1, x_2} && =0, & x_1 &\in \mathbb{R},
\end{aligned}
\right. 
\end{equation}
where we  assume that the forcing $ b$ and the boundary data $g$ are smooth and decay fast enough at infinity. Then, 
\begin{align}
u_{x_2} \pare{x_1, 0} & =\int _{ -\infty}^0  e^{ y_2 \Lambda}b\pare{x_1, y_2} \textnormal{d} y_2+\Lambda g(x_1)\label{eq:pa2u0}.
\end{align}
\end{lemma}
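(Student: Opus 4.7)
The plan is to solve the Poisson problem explicitly by Fourier transform in the tangential variable $x_1$, reducing the PDE to a family of second-order linear ODEs in $x_2$ parameterized by the frequency $k$. Setting $\hat{u}(k,x_2) = \mathcal{F}_{x_1}(u)(k,x_2)$, equation \eqref{eq:Poisson} becomes
\begin{equation*}
\partial_{x_2}^2 \hat{u}(k,x_2) - k^2 \hat{u}(k,x_2) = \hat{b}(k,x_2), \qquad x_2 \in (-\infty, 0),
\end{equation*}
subject to $\hat{u}(k,0) = \hat{g}(k)$ and $\partial_{x_2}\hat{u}(k,x_2) \to 0$ as $x_2 \to -\infty$. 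The general solution is the sum of a particular solution and a combination of the homogeneous modes $e^{|k| x_2}$ and $e^{-|k| x_2}$, the latter being discarded to comply with the decay condition at $-\infty$.

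Next, I would construct the Green's function $G(x_2,y_2)$ for the operator $\partial_{x_2}^2 - k^2$ on $(-\infty,0)$ with homogeneous Dirichlet condition at $x_2=0$ and decay at $-\infty$, using the method of images. A direct computation gives
\begin{equation*}
G(x_2,y_2) = -\frac{1}{2|k|}\left( e^{-|k||x_2-y_2|} - e^{|k|(x_2+y_2)} \right),
\end{equation*}
which one verifies satisfies the jump condition at $x_2=y_2$, vanishes at $x_2=0$, and decays as $x_2 \to -\infty$. Writing $\hat{u}(k,x_2) = \hat{g}(k) e^{|k| x_2} + \int_{-\infty}^0 G(x_2,y_2)\hat{b}(k,y_2)\,\d y_2$, differentiating in $x_2$, and evaluating at $x_2=0$, the derivative of the kernel collapses to $\partial_{x_2} G(0,y_2) = e^{|k| y_2}$, yielding
\begin{equation*}
\partial_{x_2}\hat{u}(k,0) = |k| \hat{g}(k) + \int_{-\infty}^0 e^{|k| y_2}\hat{b}(k,y_2)\,\d y_2.
\end{equation*}
Recognizing $|k|$ as the Fourier symbol of $\Lambda$ and $e^{|k| y_2}$ as the symbol of $e^{y_2 \Lambda}$, an inverse Fourier transform in $k$ produces exactly \eqref{eq:pa2u0}.

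The only real technical point is the justification of the inverse Fourier transform and the exchange of the $y_2$-integral with $\mathcal{F}_{x_1}^{-1}$; this is where the decay and smoothness hypotheses on $b$ and $g$ are used, since they guarantee that $\hat{b}(\cdot,y_2)$ and the integrand $e^{|k| y_2}\hat{b}(k,y_2)$ belong to suitable classes (e.g. Schwartz in $k$ uniformly in $y_2$ with integrable dependence) so Fubini and dominated convergence apply. I do not expect any genuine difficulty beyond this bookkeeping, since the method-of-images construction is classical and the boundary contribution $\Lambda g$ is immediate from the homogeneous mode $\hat{g}(k) e^{|k|x_2}$.
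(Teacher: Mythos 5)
The paper does not prove this lemma; it is imported verbatim from \cite{GS19}, where the derivation is exactly the one you give: Fourier transform in $x_1$, reduction to the ODE $\partial_{x_2}^2\hat u-k^2\hat u=\hat b$ on $(-\infty,0)$, discarding the growing mode, and reading off the Neumann trace. Your Green's function is correct (I checked the jump condition, the Dirichlet condition at $x_2=0$, and $\partial_{x_2}G(0,y_2)=e^{|k|y_2}$), so the argument is complete; the only cosmetic caveat is that $G$ is written with a $1/|k|$ prefactor that is singular at $k=0$, although the final identity $\partial_{x_2}\hat u(k,0)=|k|\hat g(k)+\int_{-\infty}^0 e^{|k|y_2}\hat b(k,y_2)\,\d y_2$ remains valid there by solving the $k=0$ ODE directly.
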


Using this lemma, we find that
\begin{equation}\label{eq:explicit_expression_Phi1}
\left.  \Theta^{\pare{1}}_{x_2}\right|_{x_2=0} =   \Lambda \xi^{\pare{1}} -\comm{\Lambda}{\eta^{\pare{0}}}\Lambda \xi^{ \pare{0}}
\end{equation}
and we can write system \eqref{eq:ALE2v2} as
\begin{subequations}\label{eq:ALE2v22}
\begin{align}
\xi_t^{(1)} &=-\frac{1}{2}\left[(\xi^{(0)}_{x_1})^2+(\Lambda\xi^{(0)})^2\right]- \eta^{(1)} + (\Lambda\xi^{(0)})^2+\beta\eta_{x_1x_1}^{(1)}+\alpha_o\eta_{tx_1}^{(1)}&&\text{ on }\Gamma\times[0,T],\\
\eta^{(1)}_t&=-\eta^{(0)}_{x_1} \xi^{(0)}_{x_1} +\Lambda \xi^{\pare{1}} -\comm{\Lambda}{\eta^{\pare{0}}}\Lambda \xi^{ \pare{0}}&&\text{ on }\Gamma\times[0,T].
\end{align}
\end{subequations}

Recalling that
$$
\eta^{(0)}_t=\Lambda \xi^{(0)},
$$
and Tricomi's identity
\begin{equation}\label{tricomi}
(\cH f)^2-f^2=2\cH\left(f\cH f\right),
\end{equation}
we can compute
\begin{align*}
\xi_t^{(1)} &=\beta\eta_{x_1x_1}^{(1)}-\cH\left(\eta^{(0)}_t\cH \eta^{(0)}_t\right)- \eta^{(1)} +\alpha_o\eta_{tx_1}^{(1)}&&\text{ on }\Gamma\times[0,T],\\
\eta^{(1)}_t&=\eta^{(0)}_{x_1} \cH \eta^{(0)}_t +\Lambda \xi^{\pare{1}} -\comm{\Lambda}{\eta^{\pare{0}}}\eta^{(0)}_t&&\text{ on }\Gamma\times[0,T].
\end{align*}
Taking a time derivative and substituting the value of $\Lambda \xi^{(1)}_t$, we compute that
\begin{align*}
\eta^{(1)}_{tt}&=-\Lambda\eta^{(1)}-\beta\Lambda^3\eta^{(1)}+\alpha_o \Lambda \eta^{(1)}_{tx_1}-\Lambda\left((\cH \eta_t^{(1)})^2\right)+\left(\comm{\cH}{\eta^{(0)}}\Lambda \eta^{(0)}\right)_{x_1}\\
&\quad+\beta\left(\comm{\cH}{\eta^{(0)}}\Lambda^3 \eta^{\pare{0}}\right)_{x_1}-\alpha_o\left(\comm{\cH}{\eta^{(0)}}\Lambda \eta^{(0)}_{tx_1}\right)_{x_1}&&\text{ on }\Gamma\times[0,T].
\end{align*}

If we now define
$$
f(x_1,t)=\eta^{(0)}(x_1,t)+\varepsilon\eta^{(1)}(x_1,t),
$$
after neglect terms of order $\cO (\varepsilon^2)$, we conclude the following bidirectional model of gravity-capillary waves with odd viscosity

\begin{align}\label{eq:final_eq}
f_{tt}&=-\Lambda f-\beta\Lambda^3f+\alpha_o \Lambda f_{tx_1}+\varepsilon\left[-\cH\left((\cH f_t)^2\right)+\left(\comm{\cH}{f}\Lambda f\right)\right]_{x_1}\nonumber\\
&\quad +\varepsilon\left[-\alpha_o\left(\comm{\cH}{f}\Lambda f_{tx_1}\right)+\beta\comm{\cH}{f}\Lambda^3 f\right]_{x_1}&&\text{ on }\Gamma\times[0,T].
\end{align}

With an appropriate choice of the parameters, \eqref{eq:final_eq} recovers the quadratic $h-$model in \cite{aurther2019rigorous, matsuno1992nonlinear, matsuno1993two, matsuno1993nonlinear, AkMi2010, AkNi2010}.

Furthermore, we observe that some of the terms are $\cO(\varepsilon \alpha_o)$ and $\cO(\varepsilon \beta)$, \emph{i.e.} they are much smaller than the rest of the nonlinear contributions. Then one can expect that they can be neglected to find 
\begin{align}\label{eq:final_eq2}
f_{tt}&=-\Lambda f-\beta\Lambda^3f+\alpha_o \Lambda f_{tx_1}+\varepsilon\left[-\cH\left((\cH f_t)^2\right)+\left(\comm{\cH}{f}\Lambda f\right)\right]_{x_1}\text{ on }\Gamma\times[0,T].
\end{align}
A similar equation was obtained in \cite{GS19,granero2020well2,granero2020global} for the case of damped waves under the effect of even viscosity.

\subsection{Well-posedness for analytic initial data}
We recall the definition of the Wiener spaces in the real line (see \cite{gancedo2020surface} for more properties)
$$
\mathbb{A}_\tau(\bR)=\left\{h\in L^1(\bR) \text{ s.t. }\|h\|_{\mathbb{A}_\tau}=\int_{\bR}e^{\tau|n|}|\hat{h}(n)|dn<\infty\right\}.
$$

This section is devoted to the proof of the following result

\begin{theorem}\label{teo1}Let $\beta,\alpha_o\geq0$ be fixed constants. Assume that the initial data for equation \eqref{eq:final_eq} satisfies
$$
(f_0,f_1)\in L^1(\mathbb{R})\times L^1(\mathbb{R})
$$
and
\begin{equation*}
\begin{split}
f_0(x_1)&=\frac{1}{\sqrt{2\pi}}\int_{-D}^{D}\hat{f}_0(k)e^{ikx_1}dk,\\
f_1(x_1)&=\frac{1}{\sqrt{2\pi}}\int_{-D}^{D}\hat{f}_1(k)e^{ikx_1}dk,
\end{split}
\end{equation*}
for some $1<D<+\infty$. Then there exists $0<T^*$ and a solution to equation \eqref{eq:final_eq} such that
$$
(f,f_t)\in L^\infty(0,T^*;\mathbb{A}_{1}(\bR))\times L^\infty(0,T^*;\mathbb{A}_{1}(\bR))\cap C(0,T^*;\mathbb{A}_{0.5}(\bR))\times C(0,T^*;\mathbb{A}_{0.5}(\bR)).
$$
\end{theorem}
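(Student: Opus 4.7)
I would run an abstract Cauchy-Kovalevski argument in the scale $\{\mathbb{A}_\tau(\bR)\}_{\tau\geq 0}$, in the spirit of \cite{aurther2019rigorous}. Setting $g=f_t$ recasts \eqref{eq:final_eq} as the first-order system
$$\pa_t f = g,\qquad \pa_t g=-\Lambda f-\beta\Lambda^3 f+\alpha_o\Lambda g_{x_1}+\varepsilon\mathcal{N}(f,g),$$
where $\mathcal{N}$ collects the six quadratic nonlinear terms. On the Fourier side the linear symbol has trace $i\alpha_o|k|k$ and non-negative determinant $|k|+\beta|k|^3$, so its eigenvalues are purely imaginary and the energy $|\hat g|^2+(|k|+\beta|k|^3)|\hat f|^2$ is conserved. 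The associated semigroup $S(t)$ is therefore bounded on every $\mathbb{A}_\tau$ uniformly for $t$ in bounded intervals, and the band-limited hypothesis provides the uniform data control $\|U_0\|_{\mathbb{A}_\tau}\leq e^{\tau D}\int_{-D}^D(|\hat f_0|+|\hat f_1|)\,dk$ for every $\tau\leq D$.

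\textbf{Wiener calculus and nonlinear bound.} Three ingredients are required: the algebra property $\|uv\|_{\mathbb{A}_\tau}\leq C\|u\|_{\mathbb{A}_\tau}\|v\|_{\mathbb{A}_\tau}$ (from $\widehat{uv}=(2\pi)^{-1/2}\hat u\ast\hat v$), the contraction $\|\cH u\|_{\mathbb{A}_\tau}\leq\|u\|_{\mathbb{A}_\tau}$ (the symbol has modulus one), and the Cauchy-Kovalevski loss-of-derivative estimate
$$\|\Lambda^j u\|_{\mathbb{A}_\tau}\leq\frac{(j/e)^j}{(\tau'-\tau)^j}\|u\|_{\mathbb{A}_{\tau'}}\qquad(0\leq\tau<\tau'),$$
obtained from $\sup_{x\geq 0}x^je^{-(\tau'-\tau)x}=(j/e)^j(\tau'-\tau)^{-j}$. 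These combine to give $\|\comm{\cH}{u}v\|_{\mathbb{A}_\tau}\leq 2\|u\|_{\mathbb{A}_\tau}\|v\|_{\mathbb{A}_\tau}$, and a term-by-term inspection of $\mathcal{N}$ shows that the worst contributions $\beta(\comm{\cH}{f}\Lambda^3 f)_{x_1}$ and $\alpha_o(\comm{\cH}{f}\Lambda g_{x_1})_{x_1}$ cost four derivatives, yielding
$$\|\mathcal{N}(f,g)\|_{\mathbb{A}_\tau}\leq\frac{C(\alpha_o,\beta)}{(\tau'-\tau)^4}\bigl(\|f\|_{\mathbb{A}_{\tau'}}+\|g\|_{\mathbb{A}_{\tau'}}\bigr)^2,$$
together with the analogous Lipschitz estimate for $\mathcal{N}(U_1)-\mathcal{N}(U_2)$.

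\textbf{Iteration with shrinking radius.} Choose $\tau(t)=D-\delta t$ with $\delta>0$ to be fixed, and define the iterates $U^{(n+1)}(t)=S(t)U_0+\varepsilon\int_0^t S(t-s)(0,\mathcal{N}(U^{(n)}(s)))^{\top}\,ds$ starting from $U^{(0)}(t)=S(t)U_0$. Splitting the Duhamel integral at the intermediate radius $\tfrac12(\tau(s)+\tau(t))$ and inserting the nonlinear bound yields the standard Nirenberg-Nishida closed inequality
$$A_{n+1}(t)\leq C_0+\frac{C\varepsilon}{\delta}A_n(t)^2,\qquad A_n(t):=\sup_{s\in[0,t]}\|U^{(n)}(s)\|_{\mathbb{A}_{\tau(s)}},$$
where $C_0$ is comparable to $\|U_0\|_{\mathbb{A}_D}$. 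Choosing $\delta$ large in terms of $\varepsilon$, $\alpha_o$, $\beta$, $\|U_0\|_{\mathbb{A}_D}$ bootstraps this to $A_n(t)\leq 2C_0$, and the Lipschitz version makes $(U^{(n)})$ Cauchy in $C([0,T^*];\mathbb{A}_{\tau(t)})$ with $T^*=(D-1)/\delta$, so that $\tau(T^*)=1$. The limit supplies a solution with $(f,g)\in L^\infty(0,T^*;\mathbb{A}_1(\bR)\times\mathbb{A}_1(\bR))$; continuity into $\mathbb{A}_{0.5}\times\mathbb{A}_{0.5}$ is recovered by revisiting the equation, which places $f_{tt}$ in $L^\infty(0,T^*;\mathbb{A}_{0.5})$ thanks to the buffer between radii $1$ and $0.5$, followed by a frequency-truncation Arzelà-Ascoli argument.

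\textbf{Main obstacle.} The hard part is that $\mathcal{N}$ carries up to four spatial derivatives whereas the linear semigroup is purely dispersive and offers no smoothing to absorb them; this rules out any Sobolev fixed point and forces the analytic-scale framework, which trades analyticity for regularity. Quantitatively, the delicate issue is the sharp tracking of constants so that $\delta$ can be chosen large enough to close the bootstrap without collapsing the existence time $T^*=(D-1)/\delta$ to zero. The fact that $\mathcal{N}$ is exactly quadratic in $(f,g)$ is what makes the Nirenberg-Nishida closure possible.
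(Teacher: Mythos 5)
Your overall strategy (work in the Wiener scale $\mathbb{A}_\tau$, exploit band-limited data, treat the quadratic nonlinearity perturbatively) is in the right spirit, but the quantitative core of your iteration does not close, and this is a genuine gap rather than a bookkeeping issue. You correctly compute that the worst terms of $\mathcal{N}$, namely $\beta\bigl(\comm{\cH}{f}\Lambda^3 f\bigr)_{x_1}$ and $\alpha_o\bigl(\comm{\cH}{f}\Lambda f_{tx_1}\bigr)_{x_1}$, cost four derivatives, so your nonlinear estimate carries the factor $(\tau'-\tau)^{-4}$. But the Nirenberg--Nishida scheme with a time-shrinking radius $\tau(t)=D-\delta t$ only closes when the loss is (at most) first order: with $\tau(s)-\tau(t)=\delta(t-s)$, the Duhamel contribution you must control is
\begin{equation*}
\varepsilon\int_0^t \frac{C}{\bigl(\tfrac{1}{2}\delta(t-s)\bigr)^4}\,A_n(s)^2\,ds,
\end{equation*}
and $\int_0^t (t-s)^{-4}\,ds$ diverges for every $\delta$, so the claimed inequality $A_{n+1}(t)\le C_0+\tfrac{C\varepsilon}{\delta}A_n(t)^2$ does not follow from your stated bound. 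No choice of $\delta$ repairs this; the abstract Cauchy--Kovalevskaya theorem is simply false for loss of more than one derivative, and the linear propagator here gives back at most a factor $1/(r^+-r^-)\lesssim |k|^{-1/2}$ (and nothing more when $\alpha_o=\beta=0$ is allowed), which is not enough to bring the loss down to first order.

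The paper closes the argument by organizing the perturbation differently: it expands $f=\sum_\ell \lambda^{\ell+1}f^{(\ell)}$ and estimates the $\ell$-th term of the cascade at the radius $R+1-\ell$, so that each quadratic convolution is afforded a \emph{fixed unit decrement} of analyticity radius rather than an infinitesimal one. A unit decrement absorbs any fixed number of derivatives on the input frequencies via $|k|^n\le n!\,e^{|k|}$, and the commutator cancellation $|k||k-m|-k(k-m)=|k||k-m|\bigl(1-\sgn(k)\sgn(k-m)\bigr)$ (which vanishes unless $|k|\le|m|$) together with the dispersive denominator $r^+-r^-\ge 2\sqrt{|k|}$ moves the remaining output derivatives onto the inputs. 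The smallness needed for convergence then comes not from the radius but from the $t^\ell$ factors with Catalan-number coefficients, giving a geometrically convergent majorant for $4e\lambda t<1$. If you want to keep your formulation, you would need to replace the time-shrinking radius by this order-indexed radius decrement (or otherwise reduce the effective derivative loss to first order before invoking the abstract theorem); as written, the fixed point does not exist. A minor additional point: the solution operator is not uniformly bounded on $\mathbb{A}_\tau$ near $k=0$ (the $f_1$-component is divided by $r^+-r^-\sim 2\sqrt{|k|}$), which is harmless for band-limited $L^1$ data but should be said explicitly.
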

\begin{proof}
We look for a solution of the form 
\begin{equation}\label{eq:sol}
f(x,t)=\sum_{\ell=0}^{\infty}\lambda^{\ell+1}f^{(\ell)}(x,t)
\end{equation}
for some $\lambda$ to be fixed later. By substituting this expression into \eqref{eq:final_eq} and matching terms we get that $f^{(\ell)}$ satisfies the equation
\begin{equation}\label{eq:1}
\begin{split}
f_{tt}^{(\ell)}=&-\Lambda f^{(\ell)}-\beta\Lambda^3f^{(\ell)}+\alpha_o\Lambda f_{tx_1}^{(\ell)}\\
             &+\sum_{j=0}^{\ell-1}\Big[-\cH\left(\cH f_t^{(j)}\cH f_t^{(\ell-1-j)}\right)+\cH\left(f^{(j)}\Lambda f^{(\ell-1-j)}\right)-f^{(j)}\cH\Lambda f^{(\ell-1-j)}\Big]_{x_1}\\
						 &-\alpha_o\sum_{j=0}^{\ell-1}\Big[\left(\cH\left(f^{(j)}\Lambda f_{tx_1}^{(\ell-1-j)}\right)-f^{(j)}\cH\Lambda f_{tx_1}^{(\ell-1-j)}\right)\Big]_{x_1}\\
						 &+\beta\sum_{j=0}^{\ell-1}\Big[\cH\left(f^{(j)}\Lambda^3f^{(\ell-1-j)}\right)-f^{(j)}\cH\Lambda^3f^{(\ell-1-j)}\Big]_{x_1}
\end{split}
\end{equation}
with initial conditions 
\begin{equation*}
f^{(l)}(x_1,0)=\left\{
        \begin{tabular}{cr}
        $0$  &if  $\ell\neq 0$, \\
				&\\
        $\frac{f_0}{\lambda}$  &if $\ell=0$.
        \end{tabular}
        \right.
				\quad\text{and}\quad
f_t^{(l)}(x_1,0)=\left\{
        \begin{tabular}{cr}
        $0$  &if  $\ell\neq 0$, \\
				&\\
        $\frac{f_1}{\lambda}$  &if $\ell=0$.
        \end{tabular}
        \right.
\end{equation*}
Using the Fourier series expansion, from \eqref{eq:1} we get that each $\widehat{f}^{(\ell)}(x_1,t)$ satisfies the differential equation
\begin{equation*}
\widehat{f}_{tt}^{(\ell)}(k,t)=-|k|\widehat{f}^{(\ell)}(k,t)-\beta|k|^3\widehat{f}^{(\ell)}(k,t)+i\,\alpha_ok|k|\widehat{f}_{t}^{(\ell)}(k,t)+F(k,t)
\end{equation*}
where 
\begin{equation}\label{eq:2}
\begin{split}
F(k,t)=&|k|\sum_{j=0}^{\ell-1}\int_{-\infty}^{\infty}\sgn(m)\widehat{f}_t^{(j)}(m,t)\sgn(k-m)\widehat{f}_t^{(\ell-1-j)}(k-m,t)dm\\
&+\sum_{j=0}^{\ell-1}\int_{-\infty}^{\infty}\widehat{f}^{(j)}(m,t)\widehat{f}^{(\ell-1-j)}(k-m,t)\Big[|k||k-m|-k(k-m)\Big]dm\\
&+i\alpha_o\sum_{j=0}^{\ell-1}\int_{-\infty}^{\infty}\widehat{f}^{(j)}(m,t)\widehat{f}_t^{(\ell-1-j)}(k-m,t)(k-m)\Big[|k||k-m|-k(k-m)\Big]dm\\
&+\beta \sum_{j=0}^{\ell-1}\int_{-\infty}^{\infty}\widehat{f}^{(j)}(m,t)\widehat{f}^{(\ell-1-j)}(k-m,t)(k-m)^2\Big[|k||k-m|-k(k-m)\Big]dm.
\end{split}
\end{equation}
Solving \eqref{eq:1} for $\ell=0$ we get 
\begin{equation*}
\widehat{f}^{(0)}(k,t)=\frac{1}{i(r^+-r^-)}\left\{[\hat{f}_1-ir^-\hat{f}_0]e^{ir^+t}-[\hat{f}_1-ir^+\hat{f}_0]e^{ir^-t}\right\}
\end{equation*}
where
\begin{equation*}
r^{\pm}=r^{\pm}(k)=\frac{\alpha_ok|k|\pm\sqrt{\alpha_o^2k^4+4(|k|+\beta|k|^3)}}{2}.
\end{equation*}
Similarly, for $\ell>0$ we obtain 
\begin{equation*}
\widehat{f}^{(\ell)}(k,t)=\frac{1}{i(r^+-r^-)}\int_{0}^{t}F(k,s)\left\{e^{ir^+(t-s)}-e^{ir^-(t-s)}\right\}ds
\end{equation*}
with $F(k,t)$ given in \eqref{eq:2}. Thus
\begin{equation*}
\widehat{f}_t^{(\ell)}(k,t)=\frac{1}{(r^+-r^-)}\int_{0}^{t}F(k,s)\left\{r^+e^{ir^+(t-s)}-r^-e^{ir^-(t-s)}\right\}ds.
\end{equation*}
Let us exploit the commutator structure of the nonlinearity (cf. \cite{granero2018asymptotic}). In particular, we note that
\begin{equation*}
[|k||k-m|-k(k-m)]=|k||k-m|[1-\sgn(k)\sgn(k-m)]\leq 2|k||k-m|
\end{equation*}
for $k<m$ and 
$$
[|k||k-m|-k(k-m)]=0
$$ otherwise. Hence 
\begin{equation}\label{eq:3}
\begin{split}
\frac{1}{r^+-r^-}[|k||k-m|-k(k-m)]&\leq2\sqrt{|m|}|k-m|\\
&\leq2(1+|m|)|k-m|.
\end{split}
\end{equation}

Let us fix $1<D< R\in\mathbb{Z}_+$ such that 
\begin{equation*}
\frac{D(R+1)}{1+R^2}\leq1.
\end{equation*}
Since the series \eqref{eq:sol} are respectively bounded by 
\begin{equation}\label{eq:solbound}
\sup\limits_{0\leq t\leq T}\sum_{l=0}^{\infty}\lambda^{\ell+1}\|f^{(\ell)}(t)\|_{\mathbb{A}_1}\quad\text{and}\quad \sup\limits_{0\leq t\leq T}\sum_{l=0}^{\infty}\lambda^{\ell+1}\|f_t^{(\ell)}(t)\|_{\mathbb{A}_1}
\end{equation}
by proving the boundedness of \eqref{eq:solbound} we get the absolute convergence of \eqref{eq:sol} and, hence, the existence of solutions. We start by considering the truncated series 
\begin{equation}\label{eq:truncseries}
S_R^1\vcentcolon=\sum_{\ell=0}^{R}\lambda^{\ell+1}f^{(\ell)}(x_1,t)\quad\text{and}\quad S_R^2\vcentcolon=\sum_{\ell=0}^{R}\lambda^{\ell+1}f_t^{(\ell)}(x_1,t).
\end{equation}
Thus, because of \eqref{eq:2} and \eqref{eq:3}, we obtain 
\begin{equation*}
\begin{split}
\|f^{(\ell)}(t)\|_{\mathbb{A}_{R+1-l}}\leq&\int_{-\infty}^{\infty}e^{(R+1-\ell)|k|}\int_{0}^{t}\sum_{j=0}^{\ell-1}\left[\int_{-\infty}^{\infty}\sqrt{|k|}|\hat{f}_t^{(j)}(m,s)||\hat{f}_t^{(\ell-1-j)}(k-m,s)|dm\right.\\
&+2\int_{-\infty}^{\infty}|k-m|(1+|m|)|\hat{f}^{(j)}(m,s)||\hat{f}^{(\ell-1-j)}(k-m,s)|dm\\
&+2\alpha_o\int_{-\infty}^{\infty}|k-m|^2(1+|m|)|\hat{f}^{(j)}(m,s)||\hat{f}_t^{(\ell-1-j)}(k-m,s)|dm\\
&+2\beta\left.\int_{-\infty}^{\infty}|k-m|^3(1+|m|)|\hat{f}^{(j)}(m,s)||\hat{f}^{(\ell-1-j)}(k-m,s)|dm\right]dsdk\\
\leq&C_1(\alpha_o,\beta)\int_{0}^{t}\sum_{j=0}^{\ell-1}\left[\|f_t^{(j)}(s)\|_{\mathbb{A}_{R+2-\ell}}\|f_t^{(\ell-1-j)}(s)\|_{\mathbb{A}_{R+2-\ell}}\right.\\
&\mkern+120mu+\|f^{(j)}(s)\|_{\mathbb{A}_{R+2-\ell}}\|f^{(\ell-1-j)}(s)\|_{\mathbb{A}_{R+2-\ell}}\\
&\mkern+120mu+\|f^{(j)}(s)\|_{\mathbb{A}_{R+2-\ell}}\|f_t^{(\ell-1-j)}(s)\|_{\mathbb{A}_{R+2-\ell}}\\
&\mkern+120mu+\left.\|f^{(j)}(s)\|_{\mathbb{A}_{R+2-\ell}}\|f^{(\ell-1-j)}(s)\|_{\mathbb{A}_{R+2-\ell}}\right]ds
\end{split}
\end{equation*}
where we have used Fubini's theorem together with $\hat{f}^{(l)}(0,t)=0$ and the inequalities
\begin{equation*}
\begin{split}
1+|k|&\leq e^{|k|}\ \forall k\in\mathbb{R}\\
|k|^n&\leq n! e^{|k|}\ \forall k\in\mathbb{R}\\
|k|&\leq c e^{\frac{|k|}{c}}\leq ce^{\frac{|k-m|+|m|}{c}}\ \forall c\in\mathbb{Z}_+.
\end{split}
\end{equation*}
Moreover, using the commutator structure again, we obtain that
\begin{equation*}
\begin{split}
\left|\frac{r^{\pm}}{r^+-r^-}\right|[|k||k-m|-k(k-m)]
&\leq2 (|m|+\alpha_o|m|^2+\alpha_o|m|^3)|k-m|
\end{split}
\end{equation*}
for $k<m$ and, hence, we also find that
\begin{equation*}
\begin{split}
\|f_t^{(\ell)}(t)\|_{\mathbb{A}_{R+1-\ell}}\leq&C_2(\alpha_o,\beta)\int_{0}^{t}\sum_{j=0}^{\ell-1}\left[\|f_t^{(j)}(s)\|_{\mathbb{A}_{R+2-\ell}}\|f_t^{(\ell-1-j)}(s)\|_{\mathbb{A}_{R+2-\ell}}\right.\\
&\mkern+120mu+\|f^{(j)}(s)\|_{\mathbb{A}_{R+2-\ell}}\|f^{(\ell-1-j)}(s)\|_{\mathbb{A}_{R+2-\ell}}\\
&\mkern+120mu+\|f^{(j)}(s)\|_{\mathbb{A}_{R+2-\ell}}\|f_t^{(\ell-1-j)}(s)\|_{\mathbb{A}_{R+2-\ell}}\\
&\mkern+120mu+\left.\|f^{(j)}(s)\|_{\mathbb{A}_{R+2-\ell}}\|f^{(\ell-1-j)}(s)\|_{\mathbb{A}_{R+2-\ell}}\right]ds.
\end{split}
\end{equation*}
On the other hand, since $R+2-\ell\leq R+2-\ell+j=R+1-(\ell-1-j)$, we have
\begin{equation*}
\|f^{(\ell-1-j)}(s)\|_{\mathbb{A}_{R+2-\ell}}\leq\|f^{(\ell-1-j)}(s)\|_{\mathbb{A}_{R+1-(\ell-1-j)}}.
\end{equation*}
Furthermore, since $R+2-\ell=R+1-(\ell-1)\leq R+1-j$ for $j\leq \ell-1$ we also have
\begin{equation*}
\|f^{(j)}(s)\|_{\mathbb{A}_{R+2-l}}\leq\|f^{(j)}(s)\|_{\mathbb{A}_{R+1-j}}.
\end{equation*}
As a consequence, we find
\begin{equation*}
\begin{split}
\|f^{(\ell)}(t)\|_{\mathbb{A}_{R+1-\ell}}+\|f_t^{(\ell)}(t)\|_{\mathbb{A}_{R+1-\ell}}\leq&C(\alpha_o,\beta)\int_{0}^{t}\sum_{j=0}^{\ell-1}\left[\|f_t^{(j)}(s)\|_{\mathbb{A}_{R+2-\ell}}\|f_t^{(\ell-1-j)}(s)\|_{\mathbb{A}_{R+2-\ell}}\right.\\
&\mkern+120mu+\|f^{(j)}(s)\|_{\mathbb{A}_{R+2-\ell}}\|f_t^{(\ell-1-j)}(s)\|_{\mathbb{A}_{R+2-\ell}}\\
&\mkern+120mu+\left.\|f^{(j)}(s)\|_{\mathbb{A}_{R+2-\ell}}\|f^{(\ell-1-j)}(s)\|_{\mathbb{A}_{R+2-\ell}}\right]ds\\
\leq&C(\alpha_o,\beta)\int_{0}^{t}\sum_{j=0}^{\ell-1}\left[\|f_t^{(j)}(s)\|_{\mathbb{A}_{R+1-j}}\|f_t^{(\ell-1-j)}(s)\|_{\mathbb{A}_{R+1-(\ell-1-j)}}\right.\\
&\mkern+120mu+\|f^{(j)}(s)\|_{\mathbb{A}_{R+1-j}}\|f_t^{(\ell-1-j)}(s)\|_{\mathbb{A}_{R+1-(\ell-1-j)}}\\
&\mkern+120mu+\left.\|f^{(j)}(s)\|_{\mathbb{A}_{R+1-j}}\|f^{(\ell-1-j)}(s)\|_{\mathbb{A}_{R+1-(\ell-1-j)}}\right]ds
\end{split}
\end{equation*}
with $C(\alpha_o,\beta)=2\max\left\{C_1(\alpha_o,\beta),C_2(\alpha_o,\beta)\right\}$. Let us define 
\begin{equation*}
\mathcal{A}_\ell(t)=C(\alpha_o,\beta) e^{-\frac{\ell+1}{1+\ell R^2}D(R+1)}\left[\|f^{(\ell)}(t)\|_{\mathbb{A}_{R+1-\ell}}+\|f_t^{(\ell)}(t)\|_{\mathbb{A}_{R+1-\ell}}\right].
\end{equation*}
First we observe that, since $R>1$, for $0\leq j\leq \ell-1$,
\begin{equation}\label{eq:4}
\frac{\ell+1}{1+\ell R^2}+2\geq\frac{(\ell-1-j+1)}{1+(\ell-1-j)R^2}+\frac{(j+1)}{1+jR^2},
\end{equation}
so that 
\begin{equation*}
e^{-\frac{l+1}{1+lR^2}D(R+1)}\leq e^2 e^{-\frac{l-1-j+1}{1+(l-1-j)R^2}D(R+1)}e^{-\frac{j+1}{1+jR^2}D(R+1)}.
\end{equation*}
Hence, the former recursion for $\|f^{(l)}(t)\|_{X_{R+1-l}}+\|f_t^{(l)}(t)\|_{X_{R+1-l}}$ can be equivalently written as
\begin{equation*}
\mathcal{A}_\ell(t)\leq e^2 \int_{0}^{t}\sum_{j=0}^{\ell-1}\mathcal{A}_{j}(s)\mathcal{A}_{\ell-1-j}(s)ds.
\end{equation*}
We define now
$$
\mathcal{B}_\ell(t)=e^2\mathcal{A}_\ell(t),
$$
and find that $\mathcal{B}_\ell$ satisfies
\begin{equation*}
\mathcal{B}_\ell(t)\leq  \int_{0}^{t}\sum_{j=0}^{\ell-1}\mathcal{B}_{j}(s)\mathcal{B}_{\ell-1-j}(s)ds.
\end{equation*}
First, let us observe that 
\begin{equation*}
\begin{split}
\mathcal{B}_0(t)&=e^2C(\alpha_o,\beta)e^{-D(R+1)}\left[\|f^{(0)}(t)\|_{\mathbb{A}_{R+1}}+\|f_t^{(0)}(t)\|_{\mathbb{A}_{R+1}}\right]\\
&\leq \frac{e^2}{\sqrt{2\pi}\lambda}C(\alpha_o,\beta)e^{-D(R+1)} \int_{-D}^{D}e^{(R+1)|k|}\left(|\hat{f}_0|+\left|\frac{\hat{f}_1}{r^+-r^-}\right|\right)dk\\
&\leq \frac{e^2}{\sqrt{2\pi}\lambda}C(\alpha_o,\beta) \int_{-D}^{D}\left(|\hat{f}_0|+\left|\frac{\hat{f}_1}{\sqrt{|k|}}\right|\right)dk\\
&\leq \frac{C(\|f_0\|_{\mathbb{A}_0},\|f_1\|_{\mathbb{A}_0},\|f_1\|_{L^1})}{\lambda}.
\end{split}
\end{equation*}
We fix
$$
\lambda=C(\|f_0\|_{\mathbb{A}_0},\|f_1\|_{\mathbb{A}_0},\|f_1\|_{L^1})
$$
then, we prove by induction that 
\begin{equation}\label{eq:5}
\mathcal{B}_\ell(t)\leq \mathcal{C}_\ell t^{\ell}
\end{equation}
with $\mathcal{C}_l$ being the Catalan numbers, 
\begin{equation*}
\mathcal{C}_l=\sum_{j=0}^{l-1}\mathcal{C}_j\mathcal{C}_{l-1-j},
\end{equation*}
which behave as
\begin{equation}\label{eq:6}
\mathcal{C}_l\sim O(l^{-\frac32}4^l)\ \ \text{for }l>>1.
\end{equation}
Since we have already seen that \eqref{eq:5} holds for $\ell=0$ we continue with the induction step. For $\ell\geq1$ we have
\begin{equation*}
\begin{split}
\mathcal{B}_\ell&\leq \int_{0}^{t}\sum_{j=0}^{\ell-1}\mathcal{B}_{j}(s)\mathcal{B}_{\ell-1-j}(s)ds\\
&\leq \int_{0}^{t}\sum_{j=0}^{\ell-1}\mathcal{C}_js^j\mathcal{C}_{\ell-1-j}s^{l-1-j}ds\\
&=\mathcal{C}_\ell\int_{0}^{t}s^{\ell-1}ds\\
&=\mathcal{C}_\ell\frac{t^\ell}{\ell}.
\end{split}
\end{equation*}
Therefore, because of \eqref{eq:6}, we find
\begin{equation*}
\begin{split}
\|f^{(\ell)}(t)\|_{\mathbb{A}_{1}}+\|f_t^{(\ell)}(t)\|_{\mathbb{A}_{1}}&\leq\|f^{(\ell)}(t)\|_{\mathbb{A}_{R+1-\ell}}+\|f_t^{(\ell)}(t)\|_{\mathbb{A}_{R+1-\ell}}\\
&\leq [C(\alpha_o,\beta)]^{-1} e^{-2}e^{\frac{\ell+1}{1+\ell R^2}D(R+1)}4^\ell t^{\ell}
\end{split}
\end{equation*}
In a similar way 
\begin{equation*}
\|f^{(0)}(t)\|_{\mathbb{A}_{1}}+\|f_t^{(0)}(t)\|_{\mathbb{A}_{1}}\leq C(\|f_0\|_{\mathbb{A}_0},\|f_1\|_{\mathbb{A}_0},\|f_1\|_{L^1})
\end{equation*}
Then, the truncated series \eqref{eq:truncseries} satisfies the estimates
\begin{equation*}
\begin{split}
\|S_R^1\|_{\mathbb{A}_1}\leq& C(\|f_0\|_{\mathbb{A}_0},\|f_1\|_{\mathbb{A}_0},\|f_1\|_{L^1})+2\frac{\lambda}{e^2C(\alpha_o,\beta)}\sum_{\ell=1}^{R}e^{\frac{D(R+1)}{1+\ell R^2}}\left(4 e^{\frac{D(R+1)}{1+\ell R^2}}\lambda t\right)^\ell\\
\leq& C(\|f_0\|_{\mathbb{A}_0},\|f_1\|_{\mathbb{A}_0},\|f_1\|_{L^1})+2\frac{\lambda}{C(\alpha_o,\beta)}\sum_{\ell=1}^{R}\left(4 e\lambda t\right)^\ell,
\end{split}
\end{equation*}
as well as 
\begin{equation*}
\begin{split}
\|S_R^2\|_{\mathbb{A}_1}\leq& C(\|f_0\|_{\mathbb{A}_0},\|f_1\|_{\mathbb{A}_0},\|f_1\|_{L^1})+2\frac{\lambda}{C(\alpha_o,\beta)}\sum_{\ell=1}^{R}\left(4 e\lambda t\right)^\ell,
\end{split}
\end{equation*}
Thus, we conclude that, if 
\begin{equation*}
 t\leq T^*<\frac{1}{4 eC(\|f_0\|_{\mathbb{A}_0},\|f_1\|_{\mathbb{A}_0},\|f_1\|_{L^1})},
\end{equation*}
we can take the limit as $R\to+\infty$ in \eqref{eq:truncseries} and we obtain the existence of 
\begin{equation*}
f(x_1,t)=S_{\infty}^1\quad\text{and}\quad f_t(x_1,t)=S_{\infty}^2.
\end{equation*}
We also observe that the above estimates ensure $\displaystyle f, f_t\in L^{\infty}\left(0,T^*;\mathbb{A}_1\right)$. In addition, since both $f$ and $f_t$ are analytic functions in space, using the Cauchy product of power series we also find that $\displaystyle f, f_t\in C\left(0,T^*; \mathbb{A}_{0.5}\right)$.
\end{proof}

\subsection{Well-posedness for Sobolev initial data}
We recall the definition of the standard $L^2$-based Sobolev spaces
$$
H^s(\bR)=\left\{h\in L^2(\bR) \text{ s.t. }\|h\|_{H^s}^2=\int_{\bR}(1+|n|^{2s})|\hat{h}(n)|^2dn<\infty\right\}.
$$

In this section we prove the well-posedness of equation \eqref{eq:final_eq2} with periodic boundary conditions. In order to do that we will make extensive use of the following commutator estimate \cite{dawson2008decay}
\begin{align}\label{commutatorH}
\norm{ \partial_x^\ell \comm{\cH}{u}\partial_x^m v}_{L^p}\leq C\norm{ \partial_x^{\ell+m}u}_{L^\infty}\|v\|_{L^p}, && p\in(1,\infty), && \ell,m\in\mathbb{N},
\end{align}
and the fractional Leibniz rule (see \cite{grafakos2014kato,kato1988commutator,kenig1993well}):
$$
\|\Lambda^s(uv)\|_{L^p}\leq C\left(\|\Lambda^s u\|_{L^{p_1}}\|v\|_{L^{p_2}}+\|\Lambda^s v\|_{L^{p_3}}\|u\|_{L^{p_4}}\right),
$$
which holds whenever
$$
\frac{1}{p}=\frac{1}{p_1}+\frac{1}{p_2}=\frac{1}{p_3}+\frac{1}{p_4}\qquad \mbox{where $1/2<p<\infty,1<p_i\leq\infty$},
$$
and $s>\max\{0,1/p-1\}$. 

Our result reads as follows
\begin{theorem}\label{teo2}Let $\beta>0$ be a constant and $(f_0,f_1)\in H^{4.5}(\bR)\times H^{3}(\bR) $ be the initial data for equation \eqref{eq:final_eq2}. Then there exists $0<T^*$ and a unique solution
$$
(f,f_t)\in L^\infty(0,T^*,H^{4.5}(\bR))\times L^\infty(0,T^*,H^{3}(\bR)).
$$
\end{theorem}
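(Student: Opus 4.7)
The plan is to prove Theorem \ref{teo2} by the classical regularization plus energy-estimate paradigm, with the nonlinear estimate exploiting the commutator structure of \eqref{eq:final_eq2} in a delicate way. First I would regularize \eqref{eq:final_eq2} by inserting $\mathcal{J}_{\delta}*$ at every occurrence of $f$ and $f_t$ within the nonlinearity (and convolving the entire nonlinear term with one more $\mathcal{J}_{\delta}$); the resulting right hand side is Lipschitz on $H^{N}$ for any $N$, so Picard produces a smooth local solution $f^{\delta}$ on some interval $[0,T_{\delta})$. The real task is then a uniform-in-$\delta$ energy estimate on some interval $[0,T^{*}]$.

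The natural energy for the linear part of \eqref{eq:final_eq2} is
\[
\mathcal{E}(t)=\frac{1}{2}\sum_{j=0}^{3}\Bigl(\|\partial_{x_1}^{j}f_t\|_{L^{2}}^{2}+\|\Lambda^{1/2}\partial_{x_1}^{j}f\|_{L^{2}}^{2}+\beta\|\Lambda^{3/2}\partial_{x_1}^{j}f\|_{L^{2}}^{2}\Bigr),
\]
which, since $\beta>0$, is equivalent to $\|f_t\|_{H^{3}}^{2}+\|f\|_{H^{4.5}}^{2}$ up to lower-order terms. Testing $\partial_{x_1}^{j}$ of \eqref{eq:final_eq2} against $\partial_{x_1}^{j}f_t$ for $j\in\{0,1,2,3\}$ and summing, the conservative linear contributions reassemble into $\frac{d}{dt}\mathcal{E}$, while the $\alpha_o\Lambda f_{tx_1}$ term integrates to zero by the antisymmetry of $\Lambda\partial_{x_1}$ (its Fourier symbol $ik|k|$ is purely imaginary). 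The problem thus reduces to proving
\[
\frac{d}{dt}\mathcal{E}=2\varepsilon\sum_{j=0}^{3}\int_{\mathbb{R}}\partial_{x_1}^{j}f_t\cdot\partial_{x_1}^{j}\Bigl[-\mathcal{H}((\mathcal{H}f_t)^{2})+\comm{\mathcal{H}}{f}\Lambda f\Bigr]_{x_1}\,dx_1\le C_{\beta}(1+\mathcal{E})^{3/2}.
\]

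The hard part will be this nonlinear estimate. A naive distribution of $\partial_{x_1}^{4}$ through $\comm{\mathcal{H}}{f}\Lambda f$ produces formal factors like $f\,\partial_{x_1}^{5}f$, which are not in $L^2$ at the stated regularity. The remedy proceeds in two stages. First, I would use the algebraic identity $\comm{\mathcal{H}}{f}\Lambda f=\mathcal{H}(f\Lambda f)+f\partial_{x_1}f$ (which follows from $\mathcal{H}\Lambda=-\partial_{x_1}$) and Tricomi's identity \eqref{tricomi} applied to $(\mathcal{H}f_t)^{2}$ in order to eliminate the outer Hilbert transform and bring the nonlinearity into a form where the outermost derivative acts on an explicit product. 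Second, I would expand by Leibniz and control each bilinear piece of the form $[\mathcal{H},\partial_{x_1}^{\ell}w_{1}]\partial_{x_1}^{m}w_{2}$ (with $w_{1},w_{2}\in\{f,f_t,\mathcal{H}f,\mathcal{H}f_t\}$ and $\ell+m\le 4$) via the commutator estimate \eqref{commutatorH}: the $L^{\infty}$ factor $\|\partial_{x_1}^{\ell+m+1}w_{1}\|_{L^{\infty}}$ that appears is bounded by $\|f\|_{H^{4.5}}+\|f_t\|_{H^3}$ through the 1D Sobolev embedding $H^{1.5+\epsilon}(\mathbb{R})\hookrightarrow L^{\infty}(\mathbb{R})$ whenever the $L^\infty$-derivative count does not exceed $3$, and the residual top-order pieces (where this count would reach $4$) must be reorganized by a further integration by parts that turns them into exact $x_1$-derivatives of cubic expressions (hence with zero $x_1$-integral) or into symmetric quantities controlled by the fractional Leibniz rule. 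Combining all contributions yields the differential inequality $\frac{d}{dt}\mathcal{E}\le C_{\beta}(1+\mathcal{E})^{3/2}$, and Gronwall provides a uniform existence time $T^{*}>0$ independent of $\delta$.

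The passage to the limit is then standard: Aubin-Lions compactness extracts from the uniformly bounded family $\{f^{\delta_n}\}$ a subsequence converging strongly in $C([0,T^{*}];H^{s})$ for every $s<4.5$ (with analogous convergence for the time derivative to the target $s<3$), which is ample regularity to pass to the limit in each quadratic nonlinear term and verify that $(f,f_t)$ solves \eqref{eq:final_eq2} in the appropriate weak sense with the claimed regularity. Uniqueness is obtained by running the same energy method on the difference $\rho=f_1-f_2$ of two solutions sharing initial data at a lower Sobolev level (for instance at $H^{3.5}\times H^{2}$): the coefficients of the resulting linear equation for $\rho$ are bounded in $H^{4.5}\times H^{3}$, and the commutator estimate \eqref{commutatorH} together with Gronwall forces $\rho\equiv 0$. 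The principal obstacle throughout is the nonlinear estimate described in the third paragraph, where the commutator structure of the equation cannot be substituted by any less precise tool without losing derivatives.
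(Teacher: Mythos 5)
Your proposal is correct and follows essentially the same route as the paper: the same $\beta$-weighted energy at the $H^{4.5}\times H^{3}$ level tested against derivatives of $f_t$, the same rewriting $\comm{\cH}{f}\Lambda f=\cH(f\Lambda f)+ff_{x_1}$, the same reliance on commutator estimates and integration by parts to absorb the top-order quadratic terms (in the paper the worst contributions $f\,\partial_{x_1}^{5}f$ cancel exactly between the two halves of the commutator, leaving $\comm{\Lambda}{f}\Lambda f_{x_1x_1x_1}$ which is bounded by $\|f\|_{H^2}\|f\|_{H^4}$), followed by mollification, compactness, and uniqueness via an energy estimate on the difference.
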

\begin{proof}
Without loss of generality we fix $\alpha_o=\varepsilon=1$. The proof follows from appropriate energy estimates after a standard approximation using mollifiers (see \cite{granero2017model}). As a consequence, we will focus in obtaining the \emph{a priori} estimates. We define the energy
$$
\mathcal{E}(t)=\beta\|f(t)\|_{H^{4.5}}+\|f(t)\|_{H^{3.5}}+\|f_t(t)\|_{H^{3}}.
$$

In order to estimate the low order terms we test the equation against $f_t$. Integrating by parts and using that
$$
(\cH f_t)_{x_1}=\Lambda f_t,
$$
we find that
\begin{align*}
\frac{1}{2}\frac{d}{dt}\left(\|f_t\|_{L^2}^2+\|f\|_{H^{0.5}}^2+\beta\|f\|_{H^{1.5}}^2\right)&=\int_{\mathbb{R}}\left(\comm{\cH}{f}\Lambda f\right)_{x_1}f_t dx_1\leq \mathcal{E}(t)^3.
\end{align*}

Using the Fundamental Theorem of Calculus, we obtain that
$$
\frac{d}{dt}\|f\|_{L^2}^2=2\int_\bR f f_t dx_1\leq 2\|f\|_{L^2}\|f_t\|_{L^2}\leq C\mathcal{E}(t)^2
$$

To bound the high order terms we test the equation against $\Lambda^6 f_t$. Then we find
\begin{align*}
\frac{1}{2}\frac{d}{dt}\left(\|f_t\|_{H^{3}}^2+\|f\|_{H^{3.5}}^2+\beta\|f\|_{H^{4.5}}^2\right)&=I_1+I_2,
\end{align*}
where
\begin{align*}
I_1&=-\int_{\mathbb{R}}\cH\left((\cH f_t)^2\right)_{x_1}\Lambda^6 f_t dx_1,\\
I_2&=\int_{\mathbb{R}}\left(\comm{\cH}{f}\Lambda f\right)_{x_1}\Lambda^6 f_t dx_1.
\end{align*}
We integrate by parts and find that
\begin{align*}
I_1&=-\int_{\mathbb{R}}(\cH f_t)^2\Lambda^7 f_t dx_1\\
&=\int_{\mathbb{R}}(\cH f_t)^2\partial_{x_1}^6 \Lambda f_t dx_1\\
&=-\int_{\mathbb{R}}\partial_{x_1}^3(\cH f_t)^2\partial_{x_1}^3 \Lambda f_t dx_1\\
&=-2\int_{\mathbb{R}}(\cH f_t \Lambda f_{x_1x_1t}+3\Lambda f_t\Lambda f_{tx_1})\partial_{x_1}^3 \Lambda f_t dx_1\\
&= J_1^1+J_1^2,
\end{align*}
with
\begin{align*}
J_1^1&=-2\int_{\mathbb{R}}\cH f_t \Lambda f_{x_1x_1t}\partial_{x_1} \Lambda f_{x_1x_1t} dx_1,\\
J_1^2&=-6\int_{\mathbb{R}}\Lambda f_t\Lambda f_{tx_1}\partial_{x_1} \Lambda f_{x_1x_1t} dx_1.
\end{align*}
Integrating by parts and using H\"older's inequality we find that
\begin{align*}
J_1^1&=\int_{\mathbb{R}}\Lambda f_t (\Lambda f_{x_1x_1t})^2 dx_1\leq C\mathcal{E}(t)^3,\\
J_1^2&=6\int_{\mathbb{R}}\partial_{x_1}(\Lambda f_t\Lambda f_{tx_1}) \Lambda f_{x_1x_1t} dx_1\\
&=6\int_{\mathbb{R}}(\Lambda f_{x_1t}\Lambda f_{tx_1}+\Lambda f_{t}\Lambda f_{tx_1x_1}) \Lambda f_{x_1x_1t} dx_1,\\
&\leq C\mathcal{E}(t)^3.
\end{align*}
We have to handle the second nonlinear contribution. We compute that
\begin{align*}
I_2&=-\int_{\mathbb{R}}\left(\Lambda(f\Lambda f)+(ff_{x_1})_{x_1}\right)\partial_{x_1}^6 f_t dx_1\\
&=-\int_{\mathbb{R}}\partial_{x_1}^3\left(\Lambda(f\Lambda f)+(ff_{x_1})_{x_1}\right) f_{tx_1x_1x_1} dx_1\\
&=J_2^1+J_2^2+J_2^3+J_2^4,
\end{align*}
with
\begin{align*}
J_2^1&=-\int_{\mathbb{R}}\left(\Lambda(f\Lambda f_{x_1 x_1 x_1})+(ff_{x_1 x_1 x_1 x_1})_{x_1}\right) f_{tx_1x_1x_1} dx_1,\\
J_2^2&=-\int_{\mathbb{R}}\left(\Lambda(f_{x_1}\Lambda f_{x_1x_1})+(f_{x_1}f_{x_1x_1x_1})_{x_1}\right) f_{tx_1x_1x_1} dx_1\\
J_2^3&=-\int_{\mathbb{R}}\left(\Lambda(f_{x_1x_1}\Lambda f_{x_1})+(f_{x_1x_1}f_{x_1x_1})_{x_1}\right) f_{tx_1x_1x_1} dx_1\\
J_2^4&=-\int_{\mathbb{R}}\left(\Lambda(f_{x_1x_1x_1}\Lambda f)+(f_{x_1x_1x_1}f_{x_1})_{x_1}\right) f_{tx_1x_1x_1} dx_1.
\end{align*}
Using H\"older's inequality and the fractional Leibniz rule we find that
$$
J_2^2+J_2^3+J_2^4\leq C\mathcal{E}(t)^3.
$$
We observe that we can find a commutator structure in $J^1_2$. Indeed, we have that
\begin{align*}
J_2^1&=-\int_{\mathbb{R}}\left(\comm{\Lambda}{f}\Lambda f_{x_1 x_1 x_1}+f_{x_1}f_{x_1 x_1 x_1 x_1}\right) f_{tx_1x_1x_1} dx_1\\
&\leq \|f_t\|_{H^3}\|\comm{\Lambda}{f}\Lambda f_{x_1 x_1 x_1}\|_{L^2}+\mathcal{E}(t)^3.
\end{align*}
This commutator in Fourier variables takes the following form
$$
\widehat{\comm{\Lambda}{f}g}=\int_{\bR}(|n|-|n-m|)\hat{f}(m)\hat{g}(n-m)dm.
$$
In particular, using Young's inequality for convolution, Sobolev inequality and Plancherel theorem, we conclude that
$$
\|\widehat{\comm{\Lambda}{f}g}\|_{L^2}\leq \||\cdot|\hat{f}\|_{L^1}\|g\|_{L^2}\leq C\|f\|_{H^2}\|g\|_{L^2}.
$$
Inserting this commutator estimate in $J^1_2$ we conclude that
\begin{align*}
J_2^1&\leq C\|f_t\|_{H^3}\|f\|_{H^2}\|f\|_{H^4}+\mathcal{E}(t)^3\leq C\mathcal{E}(t)^3.
\end{align*}
As a consequence, we find the following differential inequality for the energy
$$
\frac{d}{dt}\mathcal{E}(t)\leq C\mathcal{E}(t)^2+\mathcal{E}(t),
$$
and we can ensure a uniform time of existence $T^*$ such that
$$
\mathcal{E}(t)\leq 2\mathcal{E}(0).
$$
Once this uniform time of existence has been obtained, the rest of the proof is standard so we only give a sketch of the argument. First we define approximate problems using mollifiers. These mollifiers are such that the previous energy estimates also holds for the regularized PDE. Then we repeat the previous computations and find the uniform time of existence $T^*$ for the sequence of regularized problems. Finally, we can pass to the limit. The uniqueness follows from a contradiction argument and we omit it.
\end{proof}

\section{The unidirectional asymptotic model for waves with odd viscosity} \label{ref:sec4}
\subsection{Derivation}
Let us consider the following 'far-field' variables,
$$
\chi=x-t,\quad \tau=\varepsilon t.
$$
An application of the chain rule leads to
$$
\frac{\partial^2}{\partial t^2}f(\chi(x,t),\tau(t))= -f_{\chi\chi}\frac{\partial \chi}{\partial t}-f_{\chi\tau}\frac{\partial \tau}{\partial t}+\varepsilon f_{\tau\chi}\frac{\partial \chi}{\partial t}+\varepsilon f_{\tau\tau}\frac{\partial \tau}{\partial t}=f_{\chi\chi}-\varepsilon f_{\chi\tau}-\varepsilon f_{\tau\chi}+\varepsilon^2 f_{\tau\tau}.
$$
So that, neglecting terms of order $\cO(\varepsilon^2)$, equation \eqref{eq:final_eq} reads
\begin{equation*}
\begin{split}
\left(f_{\chi}-2\varepsilon f_{\tau}\right)_{\chi}=\Big(&-\cH f-\varepsilon\cH\left[(\Lambda f)^2\right]+\alpha_o\cH\Lambda^2 f+\alpha_o\varepsilon\Lambda f_{\tau}\\
&+\varepsilon\comm{\cH}{f}\Lambda f-\varepsilon\alpha_o\comm{\cH}{f}\Lambda^3f-\beta\cH\Lambda^2f+\varepsilon\beta\comm{\cH}{f}\Lambda^3 f\Big)_{\chi}\ \ \text{ on }\Gamma\times[0,T].
\end{split}
\end{equation*}
Integrating on $\chi$, reordering terms and abusing notation by using $x_1$ and $t$ as variables again, we are lead to the equation
\begin{equation}\label{eq:final_eq3}
\begin{split}
2f_{t}+\alpha_o\Lambda f_{t}=&\frac{1}{\varepsilon}\left\{f_{x_1}+\cH f+(\alpha_o-\beta)\cH f_{x_1x_1}\right\}\\
&+\cH\left(\Lambda f\right)^2-\comm{\cH}{f}\Lambda f+(\alpha_o-\beta)\comm{\cH}{f}\Lambda^3 f,\ \ \text{ on }\Gamma\times[0,T].
\end{split}
\end{equation}

This equation reminds the classical Benjamin-Ono equation (cf. \cite{benjamin1967internal,ono1975algebraic}) and Burgers-Hilbert equation (cf. \cite{biello2010nonlinear}) (see also \cite{riano2021well}). It is also similar to the equation derived in \cite{duran2020asymptotic} This similarity is not only due to the linear operators. Indeed, in the new variable
$$
u=\Lambda f,
$$
we find that \eqref{eq:final_eq3} contains the classical Burgers term:
\begin{equation}\label{eq:final_eq32}
\begin{split}
u_{t}=&\frac{1}{\varepsilon}\frac{1}{2+\alpha_o\Lambda}\left\{u_{x_1}+\cH u+(\alpha_o-\beta)\cH u_{x_1x_1}\right\}\\
&+\frac{1}{2+\alpha_o\Lambda}\left\{-\partial_{x_1}\left(u\right)^2-\Lambda\comm{\cH}{f}u+(\alpha_o-\beta)\Lambda\comm{\cH}{f}\Lambda^2 u\right\},\ \ \text{ on }\Gamma\times[0,T].
\end{split}
\end{equation}

\subsection{Well-posedness for Sobolev initial data}
In this section we study the well-posedness of equation \eqref{eq:final_eq3}.

\begin{theorem}\label{teo3}Let $\alpha_o>0$ and $\beta\geq0$ be two constants and $f_0\in H^{3}(\bR)$ be the initial data for equation \eqref{eq:final_eq3}. Then there exists $0<T^*$ and a unique solution
$$
f\in L^\infty(0,T^*,H^{3}(\bR)).
$$
\end{theorem}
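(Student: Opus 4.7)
The proof follows the blueprint already used for Theorem~\ref{teo2}: mollify the equation, derive \emph{a priori} energy estimates uniform in the mollification parameter, extract a limit, and conclude uniqueness by a contraction argument at one derivative below the energy. Only the \emph{a priori} estimate is non-routine, so I will focus on that. The central structural observation is that, because $\alpha_o>0$, the operator $2+\alpha_o\Lambda$ on the left-hand side of \eqref{eq:final_eq3} is elliptic and strictly positive; this furnishes an extra half-derivative in the natural energy and is what allows the scheme to close even when $\beta=0$.

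Testing \eqref{eq:final_eq3} against $\Lambda^{6}f$ and against $f$ in $L^{2}(\bR)$ produces, on the left-hand side, the time derivative of
\[
\mathcal{E}(t)=\|f\|_{L^2}^2+\|\Lambda^3 f\|_{L^2}^2+\tfrac{\alpha_o}{2}\|\Lambda^{0.5}f\|_{L^2}^2+\tfrac{\alpha_o}{2}\|\Lambda^{3.5}f\|_{L^2}^2,
\]
via the identity $\int f_t\,\Lambda^{2s}f\,dx_1=\tfrac{1}{2}\tfrac{d}{dt}\|\Lambda^{s}f\|_{L^2}^{2}$. On the right-hand side, the three linear terms $\varepsilon^{-1}\{f_{x_1}+\cH f+(\alpha_o-\beta)\cH f_{x_1x_1}\}$ each contribute zero to the energy identity, since $\partial_{x_1}$, $\cH$ and $\cH\partial_{x_1}^{2}$ are skew-adjoint Fourier multipliers that commute with every power of $\Lambda$; only the three nonlinear terms survive.

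The quadratic $\cH(\Lambda f)^2$ and the low-order commutator $-\comm{\cH}{f}\Lambda f$ are handled exactly as in the proof of Theorem~\ref{teo2}: a few integrations by parts, the fractional Leibniz rule, and the one-dimensional embedding $H^{1.5}(\bR)\hookrightarrow L^{\infty}(\bR)$ bound their contributions by $C\mathcal{E}(t)^{3/2}$. The delicate term is $(\alpha_o-\beta)\comm{\cH}{f}\Lambda^{3}f$, in which $\Lambda^{3}f$ sits at the very top of what $\mathcal{E}$ controls. My plan is to exploit the identity $\Lambda=\cH\partial_{x_1}$ to rewrite
\[
\comm{\cH}{f}\Lambda^{3}f=-\cH\bigl(f\,\cH f_{x_1x_1x_1}\bigr)-f\,f_{x_1x_1x_1},
\]
and, when pairing with $\Lambda^{6}f$, to redistribute the six spatial derivatives via successive integrations by parts so that no factor in the resulting trilinear integrals carries more than three derivatives. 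The resulting expressions are then estimated through H\"older's inequality, the borderline embedding $H^{0.5}(\bR)\hookrightarrow L^{3}(\bR)$ applied to $f_{x_1x_1x_1}$, and the $L^{\infty}$ control of $f$, $f_{x_1}$ and $f_{x_1x_1}$ furnished by $H^{2.5}(\bR)\hookrightarrow L^{\infty}(\bR)$; the smoothing commutator estimate \eqref{commutatorH} from the proof of Theorem~\ref{teo2} provides the sharp bound on the purely commutator pieces.

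I expect the main obstacle to be exactly this top-order term: unlike Theorem~\ref{teo2} there is no $H^{4.5}$-type regularity available to dominate $\Lambda^{3}$-objects, and the estimate must close using \emph{only} the $H^{3.5}$ gain provided by $\alpha_o>0$ together with the intrinsic smoothing of $\comm{\cH}{\cdot}$. Once all contributions are assembled one arrives at a differential inequality of the form $\tfrac{d}{dt}\mathcal{E}(t)\leq C_{\varepsilon}\mathcal{E}(t)+C\,\mathcal{E}(t)^{3/2}$, which yields a uniform existence time $T^{*}>0$ depending only on $\|f_{0}\|_{H^{3}}$. Passage to the limit in the mollification parameter and uniqueness via a contraction argument for the difference of two solutions at a lower regularity level then complete the proof in a standard way.
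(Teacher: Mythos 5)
Your overall blueprint (mollify, energy estimate at the $H^{3}$ level by testing against $\Lambda^{6}f$, linear terms vanish by skew-adjointness, uniqueness at lower regularity) matches the paper's, and you correctly identify both the role of $\alpha_o>0$ and the fact that the only delicate point is the top-order term $(\alpha_o-\beta)\comm{\cH}{f}\Lambda^{3}f$. The difference is \emph{where} you deploy the ellipticity of $2+\alpha_o\Lambda$. The paper inverts the operator, writes $f_t=(2+\alpha_o\Lambda)^{-1}\{\cdots\}$, and after two integrations by parts places the full one-derivative gain on the test side via the bound $\|\Lambda^{4}(2+\alpha_o\Lambda)^{-1}f\|_{L^{2}}\leq C\alpha_o^{-1}\|f\|_{H^{3}}$. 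The energy is then just $\|f\|_{H^{3}}^{2}$, only an integer number of derivatives ($\partial_{x_1}^{2}$) ever lands on the nonlinearity, and a Leibniz expansion of $\partial_{x_1}^{2}\comm{\cH}{f}\Lambda^{3}f$ combined with \eqref{commutatorH} reduces everything to $\|\partial_{x_1}^{2}f\|_{L^{\infty}}\|\Lambda^{3}f\|_{L^{2}}$, which $H^{3}\hookrightarrow W^{2,\infty}$ controls. By keeping $2+\alpha_o\Lambda$ on the left you only harvest \emph{half} a derivative in the energy ($\|\Lambda^{3.5}f\|_{L^{2}}^{2}$ with weight $\alpha_o/2$), and this is what makes your top-order estimate borderline.

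Concretely, the step that does not close as described is the $(3,3,3)$ redistribution. In $\int\comm{\cH}{f}\Lambda^{3}f\cdot\Lambda^{6}f\,dx_1$ you must move three derivatives onto the commutator, and the Leibniz expansion of $\partial_{x_1}^{3}\comm{\cH}{f}\Lambda^{3}f$ produces the term $\comm{\cH}{f}\partial_{x_1}^{3}(\Lambda^{3}f)$. The only commutator estimate available in the paper, \eqref{commutatorH}, bounds this by $\|\partial_{x_1}^{3}f\|_{L^{\infty}}\|\Lambda^{3}f\|_{L^{p}}$ — it always places the derivative-laden copy of $f$ in $L^{\infty}$ — and $\partial_{x_1}^{3}f\notin L^{\infty}$ for $f\in H^{3.5}(\bR)$ ($H^{0.5}\not\hookrightarrow L^{\infty}$). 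Your proposed remedy, putting all three factors in $L^{3}$ via $H^{1/6}\hookrightarrow L^{3}$, is dimensionally consistent but requires a bilinear Calderón-commutator estimate of the form $\|\comm{\cH}{u}\partial_{x_1}^{m}v\|_{L^{3/2}}\leq C\|\partial_{x_1}^{m}u\|_{L^{3}}\|v\|_{L^{3}}$ (or a fractional-order version of \eqref{commutatorH}), which is true but is not among the tools you or the paper quote; as written the argument has a gap at exactly the term you flagged as delicate. The cleanest fix is the paper's: invert $2+\alpha_o\Lambda$ so that the nonlinearity only ever receives two integer derivatives and the $L^{\infty}$ factor is $\partial_{x_1}^{2}f$.
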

\begin{proof}
As before, the proof follows from apropriate energy estimates and a suitable sequence of approximate problems. Thus, we start with the energy estimates. We test the equation against $\Lambda^6 f$ and we find that
$$
\frac{1}{2}\frac{d}{dt}\|f\|_{H^3}^2=\int_\bR\frac{1}{2+\alpha_o\Lambda}\left\{\cH\left(\Lambda f\right)^2-\comm{\cH}{f}\Lambda f+(\alpha_o-\beta)\comm{\cH}{f}\Lambda^3 f\right\}\Lambda^6 f dx_1.
$$
Integrating by parts we have that
\begin{align*}
\frac{1}{2}\frac{d}{dt}\|f\|_{H^3}^2&=-\int_\bR\frac{1}{2+\alpha_o\Lambda}\left\{\cH\left(\Lambda f\right)^2-\comm{\cH}{f}\Lambda f+(\alpha_o-\beta)\comm{\cH}{f}\Lambda^3 f\right\}\partial_{x_1}^2 \Lambda^4f dx_1\\
&=-\int_\bR\partial_{x_1}^2\left\{\cH\left(\Lambda f\right)^2-\comm{\cH}{f}\Lambda f+(\alpha_o-\beta)\comm{\cH}{f}\Lambda^3 f\right\} \frac{\Lambda^4}{2+\alpha_o\Lambda}f dx_1.
\end{align*}
Using the bound
$$
\left\|\frac{\Lambda^4}{2+\alpha_o\Lambda}f \right\|_{L^2}\leq \|f\|_{H^3},
$$
together with the commutator estimate for the Hilbert transform \eqref{commutatorH}, we find that
\begin{align*}
\frac{1}{2}\frac{d}{dt}\|f\|_{H^3}^2&\leq C\|f\|_{H^3}^3.
\end{align*}
This differential inequality leads to a uniform time of existence. Equipped with the uniform time of existence, the existence of solution can be obtained using a sequence of regularized problems (see \cite{granero2017model}). The uniqueness is a consequence of a contradiction argument and the regularity of the solution.
\end{proof}

\subsection{Distributional solution with limited regularity} Let us consider the case $\alpha_o>0$ and $\beta=\alpha_o$. 


Before stating our result, we define our concept of distributional solution: we say that $f$ is a distributional solution of \eqref{eq:final_eq3_intro} if and only if 
\begin{multline*}
-\int_{\bR} (2+\alpha_o\Lambda)\varphi(x_1,0) f_0(x_1)dx_1ds-\int_0^T\int_{\bR} (2+\alpha_o\Lambda)\varphi_t(x_1,s) f(x_1,s)dx_1ds\\
=-\frac{1}{\varepsilon}\int_0^T\int_{\bR} f(x_1,s)\varphi_{x_1}(x_1,s)+\cH \varphi(x_1,s) f(x_1,s)dx_1ds\\
+\int_0^T\int_{\bR} \left\{\cH\left(\Lambda f\right)^2-\comm{\cH}{f}\Lambda f\right\}\varphi(x_1,s)dx_1ds,
\end{multline*}
for all $\varphi\in C^\infty_c([0,T)\times\bR)$.

\begin{theorem}\label{teo4}Let $\alpha_o>0$ and $\beta=\alpha_o$ be two constants and $f_0\in H^{1.5}(\bR)$ be the initial data for equation \eqref{eq:final_eq3}. Then there exists $0<T^*$ and at least one distributional solution
$$
f\in L^\infty(0,T^*,H^{1.5}(\bR)).
$$
\end{theorem}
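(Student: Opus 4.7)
My approach is a compactness argument based on a priori energy estimates at the $H^{3/2}$ level. First, I would regularize the initial datum and/or the equation (for instance via mollification) to obtain smooth approximations $f_n$, producible via Theorem \ref{teo3} or its regularized variant. The crucial input is a uniform-in-$n$ estimate at $H^{3/2}$ that extends the existence interval below by some $T^*$ depending only on $\|f_0\|_{H^{3/2}}$ and $\alpha_o$.

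The energy adapted to the structure of the equation is
\begin{equation*}
\mathcal{E}(f) = \|f\|_{L^2}^2 + \|\Lambda f\|_{L^2}^2 + \frac{\alpha_o}{2}\|\Lambda^{1/2}f\|_{L^2}^2 + \frac{\alpha_o}{2}\|\Lambda^{3/2}f\|_{L^2}^2,
\end{equation*}
which is equivalent to $\|f\|_{H^1}^2 + \alpha_o\|f\|_{H^{3/2}}^2$ and controls $H^{3/2}$ as long as $\alpha_o>0$. Testing the equation against $f$ and against $\Lambda^2 f$ and summing produces $\frac{d}{dt}\mathcal{E}$ on the left; the linear dispersive contributions from $\frac{1}{\varepsilon}(f_{x_1}+\cH f)$ vanish by antisymmetry (using $\Lambda^2=-\partial_{x_1}^2$ and $\cH^*=-\cH$). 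When $\beta=\alpha_o$ the potentially problematic $\comm{\cH}{f}\Lambda^3 f$ term is absent, so the only nonlinear contributions are those from $\cH(\Lambda f)^2$ and $\comm{\cH}{f}\Lambda f$. Here I would exploit two cancellations: the structural identity $\int \Lambda^2 f\cdot\cH(\Lambda f)^2\, \dd x_1 = \int \partial_{x_1}\Lambda f\cdot(\Lambda f)^2\, \dd x_1 = \frac{1}{3}\int \partial_{x_1}((\Lambda f)^3)\, \dd x_1 = 0$, and the rewriting $\comm{\cH}{f}\Lambda f = \cH(f\Lambda f)+f f_{x_1}$ (from $\cH\Lambda f=-f_{x_1}$), which after further integrations by parts reduces one of the contributions to a benign $\int (f_{x_1})^3\, \dd x_1$. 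All remaining integrals bound by $\|f\|_{H^{3/2}}^3$ via the sharp 1D Sobolev embeddings $H^{1/6}\hookrightarrow L^3$, $H^{1/4}\hookrightarrow L^4$, and $H^{1/2+\delta}\hookrightarrow L^\infty$, closing the differential inequality $\frac{d}{dt}\mathcal{E}\leq C\mathcal{E}^{3/2}$.

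Next, from the equation $f_{n,t}=(2+\alpha_o\Lambda)^{-1}(\text{RHS})$: the multiplier $(2+\alpha_o\Lambda)^{-1}$ gains one derivative in $L^2$, and every term on the right-hand side is uniformly bounded in $L^2$ thanks to the $H^{3/2}\hookrightarrow L^\infty$ embedding providing one $L^\infty$ factor in each quadratic nonlinearity. This yields uniform bounds on $f_{n,t}$ in $L^\infty(0,T^*;H^1(\bR))$. A standard Aubin-Lions argument on bounded subsets of $\bR$ then extracts a subsequence converging strongly in $C([0,T^*];H^s_{\text{loc}}(\bR))$ for every $s<3/2$, and weakly-$*$ in $L^\infty(0,T^*;H^{3/2}(\bR))$, to some limit $f$ in the desired class.

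Finally, I would pass to the limit in the distributional formulation. Weak-$*$ convergence handles the linear terms, noting that $(2+\alpha_o\Lambda)\varphi$ and $\cH\varphi$ decay sufficiently rapidly for any $\varphi\in C^\infty_c$. Strong convergence of $\Lambda f_n$ in $L^2_{\text{loc}}$ together with uniform bounds in $L^p_{\text{loc}}$ for every $p<\infty$ (from $H^{1/2}\hookrightarrow L^p$) yields convergence of $(\Lambda f_n)^2$, $f_n\Lambda f_n$, and $f_n f_{n,x_1}$ in $L^1_{\text{loc}}$, which suffices to pair against compactly supported test functions. The main obstacle, as expected, is closing the energy estimate at precisely $H^{3/2}$: losing any fractional regularity would require stronger initial data than stated. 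This sharp closing is only possible because of the specific cancellations that occur in the $\beta=\alpha_o$ case, most critically the vanishing of $\cH(\Lambda f)^2$ against $\Lambda^2 f$ and the clean rewriting of the remaining commutator via $\cH\Lambda f = -f_{x_1}$.
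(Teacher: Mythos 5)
Your proposal is correct and follows essentially the same route as the paper's proof: the same $H^{1.5}$ energy obtained by testing against $f$ and $\Lambda^2 f$, the same two key cancellations (the vanishing of $\int \cH(\Lambda f)^2\,\Lambda^2 f\,dx_1$ and the rewriting $\comm{\cH}{f}\Lambda f=\cH(f\Lambda f)+ff_{x_1}$ with the resulting $L^3$--$H^{1/6}$ bound), followed by an Aubin--Lions compactness argument on compact subsets of $\bR$ and passage to the limit in the weak formulation. The only cosmetic difference is that the paper regularizes by adding a vanishing viscosity $-\tfrac{1}{n}\partial_{x_1}^2$ rather than invoking Theorem \ref{teo3} on mollified data.
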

\begin{proof}
We consider the regularized problem
\begin{equation}\label{eq:final_eq3_distri}
\begin{split}
2f^{(n)}_{t}+\alpha_o\Lambda f^{(n)}_{t}-\frac{1}{n}f^{(n)}_{x_1 x_1}=&\frac{1}{\varepsilon}\left\{f^{(n)}_{x_1}+\cH f^{(n)}\right\}\\
&+\cH\left(\Lambda f^{(n)}\right)^2-\comm{\cH}{f^{(n)}}\Lambda f^{(n)},\ \ \text{ on }\Gamma\times[0,T],
\end{split}
\end{equation}
with the mollified initial data
$$
f^{(n)}(x_1,0)=\rho_n*f_0(x_1),
$$
where $\rho_n$ is a standard Friedrich mollifier.

We test the equation against $f^{(n)}$ and use H\"older and Sobolev inequalities to find
$$
\frac{d}{dt}\|f^{(n)}\|_{L^2}^2+\frac{d}{dt}\|\Lambda^{0.5}f^{(n)}\|_{L^2}^2+\frac{1}{n}\|f^{(n)}_{x_1}\|_{L^2}^2\leq C\|f^{(n)}_{x_1}\|_{L^2}^3.
$$
Now we test the equation against $\Lambda^{2}f^{(n)}$. We find that
\begin{align*}
\frac{d}{dt}\|\Lambda f^{(n)}\|_{L^2}^2+\frac{d}{dt}\|\Lambda^{1.5}f^{(n)}\|_{L^2}^2+\frac{1}{n}\|f^{(n)}_{x_1 x_1}\|_{L^2}^2&=\int_{\bR}\cH\left(\Lambda f^{(n)}\right)^2\Lambda^{2}f^{(n)}dx_1\\
&\quad-\int_{\bR}\comm{\cH}{f^{(n)}}\Lambda f^{(n)}\Lambda^{2}f^{(n)}dx_1.
\end{align*}
The first nonlinear contribution vanishes. Indeed,
\begin{align*}
\int_\bR\cH\left(\Lambda f^{(n)}\right)^2\Lambda^{2}f^{(n)}dx_1&=\int_{\bR}\left(\Lambda f^{(n)}\right)^2\Lambda f^{(n)}_{x_1}dx_1\\
&=0.
\end{align*}
The second nonlinear contribution can be handled as follows
\begin{align*}
-\int_{\bR}\comm{\cH}{f^{(n)}}\Lambda f^{(n)}\Lambda^{2}f^{(n)}dx_1&=-\int_{\bR}(\cH(f^{(n)}\Lambda f^{(n)})+f^{(n)}f^{(n)}_{x_1}))\Lambda^{2}f^{(n)}dx_1\\
&=\int_{\bR}(\cH(f^{(n)}\Lambda f^{(n)})+f^{(n)}f^{(n)}_{x_1}))f^{(n)}_{x_1 x_1}dx_1\\
&\leq C\|f^{(n)}_{x_1}\|_{L^3}^3\\
&\leq C\|f^{(n)}_{x_1}\|_{H^{1/6}}^3\\
&\leq C\|f^{(n)}_{x_1}\|_{L^2}^2\|f^{(n)}_{x_1}\|_{H^{0.5}}.
\end{align*}
As a consequence, if we define
$$
\mathcal{E}(t)=\|f^{(n)}\|_{L^2}^2+\|\Lambda^{0.5}f^{(n)}\|_{L^2}^2+\|\Lambda f^{(n)}\|_{L^2}^2+\|\Lambda^{1.5}f^{(n)}\|_{L^2}^2,
$$
we have that
$$
\frac{d}{dt}\mathcal{E}(t)\leq C \mathcal{E}(t)^2.
$$
We thus conclude the uniform-in-$n$ time of existence $T^*$ such that
$$
f^{(n)}\in L^\infty(0,T^*,H^{1.5}(\bR)),
$$
with a bound that is independent of $n$. This implies that
$$
f^{(n)}\overset{\ast}{\rightharpoonup} f\in L^\infty(0,T^*,H^{1.5}(\bR)).
$$

Furthermore, using the regularity of $f$ together with
$$
\|(\Lambda f)^2\|_{L^2}^2= \|\Lambda f\|_{L^4}^4\leq C\|\Lambda f\|_{H^{0.25}}^4\leq C\|\Lambda f\|_{L^2}^2\|\Lambda f\|_{H^{0.5}}^2,
$$
and we can compute
$$
f^{(n)}_t\in L^\infty(0,T^*,L^2(\bR)),
$$
with a bound that is independent of $n$.

In particular, 
$$
f^{(n)}\in L^\infty(0,T^*,H^{1.5}([-1,1])),
$$
$$
f^{(n)}_t\in L^\infty(0,T^*,L^2([-1,1])).
$$
Then, a standard application of the Aubin-Lions Theorem ensures that we can obtain a subsequence such that
$$
f^{(n_1(j))}\rightarrow f_{(1)}\in L^2(0,T^*,H^{1}([-1,1])).
$$
Similarly, the elements in this sequence satisfy
$$
f^{(n)}\in L^\infty(0,T^*,H^{1.5}([-2,2])),
$$
$$
f^{(n)}_t\in L^\infty(0,T^*,L^2([-2,2])),
$$
so, we can extract another subsequence such that
$$
f^{(n_2(j))}\rightarrow f_{(2)}\in L^2(0,T^*,H^{1}([-2,2])).
$$
Due to the uniqueness of the limit, we have that
$$
f_{(1)}=f_{(2)}
$$
at least in the common interval $[-1,1]$. Then, for each $m$, we can repeat this procedure and find different subsequences $f^{(n_m(j))}$ of the original sequence $f^{(n)}$. Now we use Cantor's diagonal argument. Then, we define the sequence
$$
f^{(\ell)}=f^{(n_\ell(\ell))}
$$
These $f^{(\ell)}$ are a subsequence of the original sequence $f^{(n)}$ and then 
$$
f^{(\ell)}\in L^\infty(0,T^*,H^{1.5}(\bR)),
$$
$$
f^{(\ell)}_t\in L^\infty(0,T^*,L^2(\bR)).
$$
Now, we observe that for each interval $[-k,k]$, we have that
$$
f^{(\ell)}\rightarrow f\in  L^2(0,T^*,H^{1}([-k,k])).
$$
Indeed, it is enough to note that the elements $f^{(\ell)}$ for $\ell\geq k+1$ are elements of a subsequence that converges in $[-k,k]$ and that the resulting limit must be unique. In addition, if we fix an arbitrary compact set $\mathcal{U}\subset\bR$, we have that
$$
f^{(\ell)}\rightarrow f\in  L^2(0,T^*,H^{1}(\mathcal{U})).
$$
Now, if we fix a text function $\varphi$, we have that the distributional form of the approximate problems reads
\begin{multline*}
-\int_{\bR} (2+\alpha_o\Lambda)\varphi(x_1,0) \rho_\ell*f_0(x_1)dx_1ds-\int_0^T\int_{\bR} (2+\alpha_o\Lambda)\varphi_t(x_1,s) f^{(\ell)}(x_1,s)dx_1ds \\
=-\frac{1}{\varepsilon}\int_0^T\int_{\bR} f^{(\ell)}(x_1,s)\varphi_{x_1}(x_1,s)+\cH \varphi(x_1,s) f^{(\ell)}(x_1,s)dx_1ds\\
+\int_0^T\int_{\bR} \left\{\cH\left(\Lambda f^{(\ell)}\right)^2-\comm{\cH}{f^{(\ell)}}\Lambda f^{(\ell)}\right\}\varphi(x_1,s)dx_1ds+\int_0^T\int_{\bR}\frac{f^{(\ell)}}{n(\ell)}\varphi_{x_1 x_1} dx_1ds.
\end{multline*}
Due to weak-$*$ convergence it is easy to see that the linear terms converge 
\begin{multline*}
-\int_{\bR} (2+\alpha_o\Lambda)\varphi(x_1,0) \rho_\ell*f_0(x_1)dx_1ds-\int_0^T\int_{\bR} (2+\alpha_o\Lambda)\varphi_t(x_1,s) f^{(\ell)}(x_1,s)dx_1ds\\
\rightarrow -\int_{\bR} (2+\alpha_o\Lambda)\varphi(x_1,0) f_0(x_1)dx_1ds-\int_0^T\int_{\bR} (2+\alpha_o\Lambda)\varphi_t(x_1,s) f(x_1,s)
dx_1ds, 
\end{multline*}
\begin{multline*}
-\frac{1}{\varepsilon}\int_0^T\int_{\bR} f^{(\ell)}(x_1,s)\varphi_{x_1}(x_1,s)+\cH \varphi(x_1,s) f^{(\ell)}(x_1,s)dx_1ds\\
\rightarrow -\frac{1}{\varepsilon}\int_0^T\int_{\bR} f(x_1,s)\varphi_{x_1}(x_1,s)+\cH \varphi(x_1,s) f(x_1,s)dx_1ds,
\end{multline*}
$$
\int_0^T\int_{\bR}\frac{f^{(\ell)}}{n(\ell)}\varphi_{x_1 x_1} dx_1ds\rightarrow0.
$$
The first nonlinear contribution can be handled as follows:
\begin{align*}
I&=\int_0^T\int_{\bR} \cH\left[\left(\Lambda f^{(\ell)}+\Lambda f\right)\left(\Lambda f^{(\ell)}-\Lambda f\right)\right]\varphi dx_1ds\\
&=\int_0^T\int_{-M}^M \left(\Lambda f^{(\ell)}+\Lambda f\right)\left(\Lambda f^{(\ell)}-\Lambda f\right)\cH\varphi dx_1ds\\
&\leq C_\varphi\|f^{(\ell)}\|_{H^1(\bR)}\|f^{(\ell)}-f\|_{H^1([-M,M])}\rightarrow0.
\end{align*}
The commutator term is of lower order and can be handled similarly. Then, passing to the limit we conclude that the limit function $f$ satisfies the distributional form \eqref{eq:final_eq3_distri}.
\end{proof}

\begin{remark}
We would like to emphasize that it is possible to find uniform-in-time energy estimates for the $H^1$ norm (instead of the $H^{1.5}$ norm). However,  the notion of solution seems unclear at that level of regularity.
\end{remark}

\subsection{Numerical study}
In this section we report a preliminary numerical study of equation \eqref{eq:final_eq32} with periodic boundary conditions in $[-\pi,\pi]$. 

These numerical results have been obtained using a spectral method to simulate both the differential and the singular integral operators. In particular, in order to simulate \eqref{eq:final_eq32} we use the Fourier-collocation method. This method considers a discretization of the spatial domain with $N$ uniformly distributed points. Then we use the Fast Fourier Transform and Inverse Fast Fourier Transform (IFFT) routines already implemented in Octave to jump between the physical and the frequency spaces. In this way we can take advantage of the fact that, in Fourier variables, the differential operators and the Hilbert transform are defined by multipliers. With this method, the problem reduces to a system of ODEs in Fourier space. To advance in time we used the standard adaptative Runge-Kutta scheme implemented in the Octave function \textit{ode45}. 
\begin{figure}[h!]
    \centering
    \includegraphics[scale=0.25]{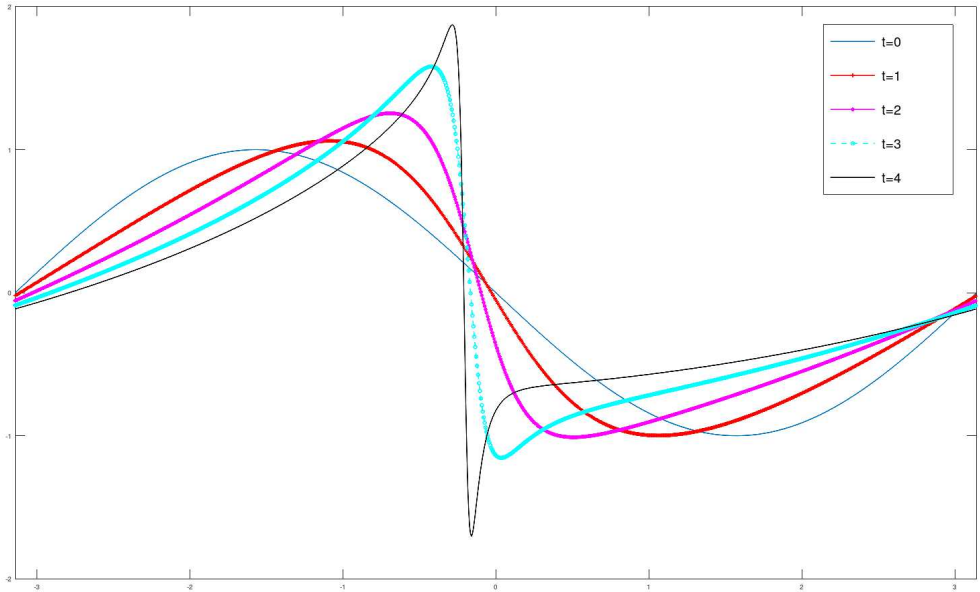}
\caption{Evolution in the case $\epsilon=\alpha=\beta=1$ and $N=2^{10}$.}
\label{fig1}
\end{figure}
\begin{figure}[h!]
    \centering
    \includegraphics[scale=0.35]{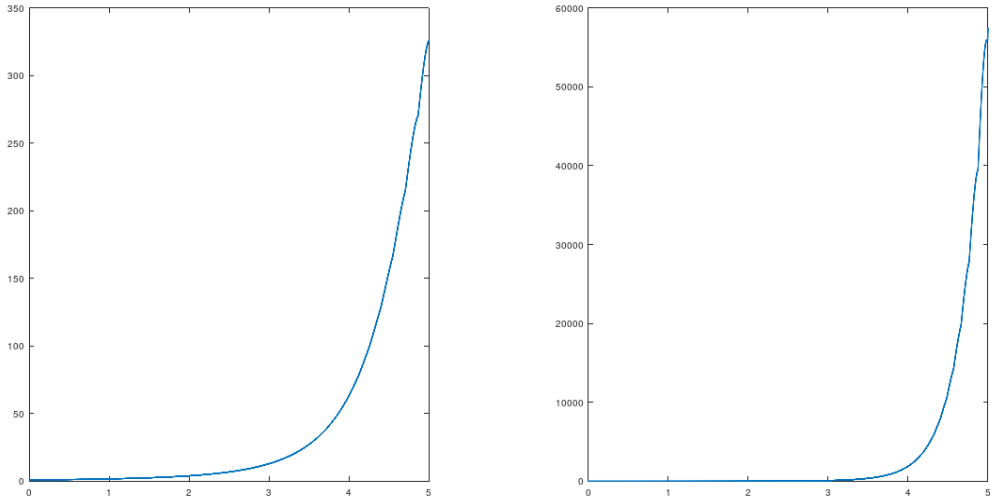}
\caption{$\|u_{x_1}(t)\|_{L^\infty}$ (left) and $\|u_{x_1 x_1}(t)\|_{L^\infty}$ (right) in the case $\epsilon=\alpha=\beta=1$ and $N=2^{11}$.}
\label{fig2}
\end{figure}
\begin{itemize}
\item[Case 1:] We consider the initial data for \eqref{eq:final_eq32} given by 
$$
u(x_1,0)=-\sin(x_1).
$$
The physical parameters are $\epsilon=\alpha=\beta=1$. Here we see that the solution is getting steeper and steeper (see figure \ref{fig1}). However, when we compute $\|u_{x_1}(t)\|_{L^\infty}$ and $\|u_{x_1 x_1}(t)\|_{L^\infty}$ we cannot conclude the existence of a finite time singularity (see figure \ref{fig2}). In particular, both the first and the second derivative grow, however, they seem to remain $\cO(10^2)$ and $\cO(10^4)$ respectively. 

\item[Case 2:] We consider the initial data for \eqref{eq:final_eq32} given by 
$$
u(x_1,0)=-10 \sin(10 x_1).
$$
The physical parameters are $\epsilon=0.1,\alpha=\beta=1$. Here we see that the solution oscillates. In particular, the solution is getting steeper but then depletes (see figure \ref{fig3}).  
\begin{figure}[h!]
    \centering
    \includegraphics[scale=0.35]{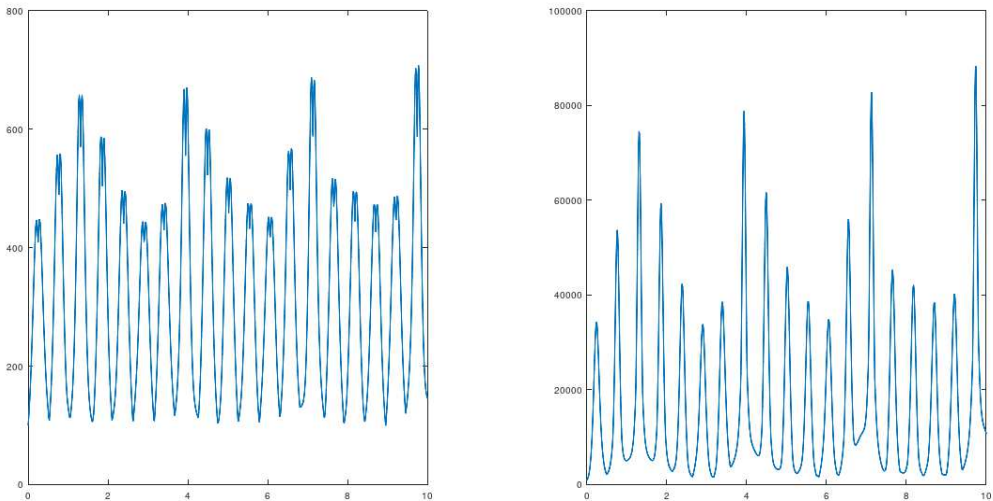}
\caption{$\|u_{x_1}(t)\|_{L^\infty}$ (left) and $\|u_{x_1 x_1}(t)\|_{L^\infty}$ (right) in the case $\epsilon=0.1,\alpha=\beta=1$ and $N=2^{11}$.}
\label{fig3}
\end{figure}
\end{itemize}

\section{Discussion} \label{ref:sec5}
In this paper we have obtained new asymptotic models for both bidirectional and unidirectional gravity-capillary odd waves. Besides the derivation, we have also studied some of their mathematical properties rigorously. In particular, we have proved a number of local in time well-posedness results in appropriate spaces. Furthermore, we have also studied the unidirectional model numerically trying to find a numerical scenario of finite time singularities. At this point, this scenario remains undetermined and the question of finite time singularities or the global existence of smooth solutions remain as open problems.

\section*{Acknowledgments}
R.G-B was supported by the project ”Mathematical Analysis of Fluids and Applications” with reference PID2019-109348GA-I00/AEI/ 10.13039/501100011033 and acronym ``MAFyA” funded by Agencia Estatal de Investigaci\'on and the Ministerio de Ciencia, Innovacion y Universidades (MICIU). Part of this research was performed when R.G-B was visiting the University Carlos III of Madrid. R.G-B is grateful to the Mathematics Department of the University Carlos III of Madrid for their hospitality during this visit. R.G-B would like to acknowledge discussions with MA. Garc\'ia-Ferrero, S. Scrobogna and D. Stan.

\bibliographystyle{plain}

\end{document}